\numberwithin{equation}{section}
\newtheorem{thm}{Theorem}[section]
\newtheorem{lma}[thm]{Lemma}
\newtheorem{cor}[thm]{Corollary}
\newtheorem{defn}[thm]{Definition}
\newtheorem{prop}[thm]{Proposition}
\renewcommand{\ge}{\geqslant}
\renewcommand{\le}{\leqslant}
\renewcommand{\geq}{\geqslant}
\renewcommand{\leq}{\leqslant}
\renewcommand{\H}{\text{H}}
\renewcommand{\P}{\text{P}}
\renewcommand{\l}{\overline{\dim}_{\text{loc}}(x,\mu)}
\renewcommand{\r}{\overline{\dim}_{\textup{reg}} \mu}
\renewcommand{\a}{\dim_{\textup{A}}}
\renewcommand{\epsilon}{\varepsilon}
\newcommand{\Height}{8}
\newcommand{\Width}{8}
\begin{document}

\title[Upper regularity dimensions]{On the upper regularity dimensions of measures}

\author[J. M. Fraser]{Jonathan M. Fraser}
\address{Jonathan M. Fraser\\
School of Mathematics \& Statistics\\University of St Andrews\\ St Andrews\\ KY16 9SS\\ UK  }
\curraddr{}
\email{jmf32@st-andrews.ac.uk}

\author[D. C. Howroyd]{ Douglas C. Howroyd}
\address{Douglas C. Howroyd\\
School of Mathematics \& Statistics\\University of St Andrews\\ St Andrews\\ KY16 9SS\\ UK  }
\curraddr{}
\email{dch8@st-andrews.ac.uk}

\subjclass[2010]{ primary: 28A80; secondary: 37C45, 28C15.}

\keywords{upper regularity dimension, Assouad dimension, local dimension, self-similar measure, self-affine measure, doubling measure, weak tangent, $L^q$-spectrum.}

\begin{abstract}
We study the \emph{upper regularity dimension} which describes the extremal local scaling behaviour of a measure and effectively quantifies the notion of \emph{doubling}. We conduct a thorough study of the upper regularity dimension, including its relationship with other  concepts  such as the Assouad dimension, the upper local dimension,  the $L^q$-spectrum and weak tangent measures.   We also compute the upper regularity dimension explicitly in a number of important contexts including self-similar measures, self-affine measures, and measures on sequences. 
\end{abstract}

\maketitle

\section{Upper regularity and upper local dimensions} \label{intro}\label{dimension}

The \emph{upper regularity dimension} of a  measure was introduced in \cite{anti1,anti2} and gives a quantifiable description of the extremal local scaling of the measure.  In a concrete sense, which we will explain, the upper regularity dimension of a measure can be viewed as the `Assouad dimension' of a measure.  The Assouad dimension of \emph{sets} has been gaining a lot of attention in the literature on fractal geometry recently and so it is natural to consider the analogous concept for measures and to place this interesting concept in a wider mathematical context. We investigate its relationship with more familiar concepts such as the local dimensions, the $L^q$-spectrum, doubling properties, and weak tangents. We also compute the upper regularity dimensions explicitly in a number of concrete settings, such as self-similar measures satisfying the strong separation property, self-affine measures supported on carpets and sponges satisfying the very strong separation property and measures supported on convergent sequences.  These examples will exhibit several different types of behaviour and will demonstrate the sharpness of our general results.

Let $\mu$ be a locally finite Borel measure on a metric space $X$ and write $\text{supp}(\mu)$ for the support of $\mu$. The upper regularity dimension of $\mu$ is defined by 
\begin{multline*} 
\overline{\dim}_{\text{reg}} \mu = \inf \Bigg\{ s \geq 0 \, \,  : \,  \text{ there exists a  constant }C  > 0\text{  such that, for all  $0< r< R $} \\ \text{  and all $x \in \text{supp} (\mu)$, we have }  \ \  \frac{\mu(B(x,R))}{\mu(B(x,r))} \leq C\left(\frac{R}{r}\right)^{s} \Bigg\}
\end{multline*}
where $B(x,R)$ is the open ball of radius $R$ and centre $x$.   We adopt the convention that $\inf \emptyset = + \infty$. This can indeed be thought of as the `Assouad dimension' of a measure since the Assouad dimension, which is a purely metric notion describing the extremal scaling behaviour of a set in a metric space, can be expressed in terms of upper regularity dimensions of measures  supported on the set, see \cite{luksak, konyagin}. In particular, for a complete metric space $F$
\[
\dim_{\text{A}} F = \inf \left\{ \overline{\dim}_{\text{reg}} \mu \,  \colon \, \mu \text{ is a Borel probability measure supported on } F\right\}.
\]
Therefore the upper regularity dimension of a measure is always bounded below by the  the Assouad dimension of its support and in some cases the upper regularity dimension will actually provide the exact Assouad dimension, for example \cite[Theorem 2.3]{fraser-howroyd}.  Note that  a non-complete metric space is dense in its completion and the Assouad dimension is preserved under taking closure, so we may assume without loss of generality that our spaces are complete.

There is also a fairly straightforward relationship with the upper local dimensions of a measure. The upper local dimension of $\mu$ at $x \in \text{supp}(\mu)$ is defined by
\[
\overline{\dim}_{\text{loc}}(x,\mu)=\limsup_{r\rightarrow 0} \frac{\log \mu(B(x,r))}{\log r}.
\]
The lower local dimension $\underline{\dim}_{\text{loc}}(x,\mu)$ is defined in a similar way, replacing $\limsup$ with $\liminf$.  These local dimensions clearly depend on the point $x$, but naturally give rise to dimensions depending only on $\mu$.  For example, the lower Hausdorff dimension of $\mu$ is
\[
\underline{\dim}_\H \mu=\text{ess}  \inf \left\{  \underline{\dim}_{\text{loc}}(x,\mu)  \ : \  x\in \text{supp}(\mu) \right\} 
\]
and the upper packing dimension is
\[
\overline{\dim}_\P \mu=\text{ess}  \sup \left\{ \overline{\dim}_{\text{loc}}(x,\mu) \ : \  x\in \text{supp}(\mu) \right\}.
\]
A measure $\mu$ is called exact dimensional when $\underline{\dim}_\H \mu=\overline{\dim}_\P \mu$.   One immediately obtains that for any locally finite Borel measure $\mu$
\[
 \r \geq \sup \left\{ \overline{\dim}_{\text{loc}}(x,\mu) \ : \  x\in \text{supp}(\mu) \right\},
\]
which shows that the upper regularity dimension is sensitive to large upper local dimension even at a single point.  In particular, let $x \in \text{supp}(\mu)$ and let $s > \r$.  Then for small enough $r < R$ we have
\[
\frac{\mu(B(x,R))}{\mu(B(x,r))} \leq C\left(\frac{R}{r}\right)^{s}
\]
for some constant $C>1$.  Fixing $R$ and letting $r \to 0$ we obtain
\[
\overline{\dim}_{\text{loc}}(x,\mu)=\limsup_{r\rightarrow 0} \frac{\log \mu(B(x,r))}{\log r} \leq  \limsup_{r\rightarrow 0} \frac{\log \mu(B(x,R)) (r/R)^s/C }{\log r}  = s
\]
as required.  This lower bound, combined with the Assouad dimension, gives a concrete and sometimes sharp lower bound on the upper regularity dimension.   For example, we show that for any self-similar measure $\mu$ satisfying the strong separation condition we have $ \r = \sup \left\{ \overline{\dim}_{\text{loc}}(x,\mu) \ : \  x\in \text{supp}(\mu) \right\}$.  However, for self-affine measures $\mu$, one may have
\[
\r >  \sup \left\{ \overline{\dim}_{\text{loc}}(x,\mu) \ : \  x\in \text{supp}(\mu) \right\}.
\]
Ahlfors-David $s$-regular measures are even more regular than exact dimensional measures in that the ratio $\mu(B(x,R))/R^s$ is uniformly bounded away from 0 and $+\infty$ where $s >0$ is the `dimension'.  An Ahlfors-David $s$-regular measure is clearly exact dimensional (with exact dimension equal to $s$) and even satisfies $\r = s$.

Another simple, but important, initial observation is that a measure is doubling if and only if it has finite upper regularity dimension.  Recall that  a  (Borel) measure is doubling if there exists a constant $0<C< \infty$ such that for all $R>0$ and $x\in \text{supp}(\mu)$ one has
\[
\frac{\mu(B(x,R))}{\mu(B(x,R/2))}\leq C.
\]
Similarly, a metric space is doubling if there exists a constant $C$ such that any ball $B(x,R)$ can be covered by fewer than $C$ balls of radius $R/2$.  It is a well-known and fundamental fact in metric geometry that a metric space is doubling if and only if it has finite Assouad dimension, see for example \cite[Lemma 9.4]{robinson}.  It turns out that the upper regularity dimension provides the natural measure theoretic analogue of this fact: a measure $\mu$ is doubling if and only if $\r <\infty$.  This follows from \cite[Lemma 3.2]{kaenmakinew}, but we include our own proof in Section \ref{doublingproof} for completeness.

\section{Results}   \label{results}

\subsection{Lower bounds and weak tangents}

We have already noted that the Assouad dimension of the support and the supremum of the upper local dimensions give elementary lower bounds for the upper regularity dimension.  We begin by refining this observation by further relating the upper regularity dimension to the (lower) $L^q$-spectrum.  Let $\mu$ be a compactly supported Borel probability measure on $\mathbb{R}^d$.  Given $q \in \mathbb{R}$ and $r>0$ we let
\[
M_r^q(\mu) = \sup  \, \left \{ \sum_{i=1}^\infty  \mu(B(x_i,r))^q \ : \ | x_i-x_j | > 2r \text{ for $i \neq j$} \right\}
\]
be the multifractal packing function, see \cite{olsenformalism}.  The  (lower) $L^q$-spectrum of $\mu$ is then given by
\[
\underline{\tau}(q)= \liminf_{r\rightarrow 0} \frac{\log M_r^q(\mu) }{\log r}, \qquad (q\in \mathbb{R}).
\]
The $L^q$-spectrum gives a description of the global fluctuations of the measure and is a key tool in multifractal analysis. For example, the Legendre transform of the $L^q$-spectrum is an upper bound for the multifractal spectrum of $\mu$ and for measures satisfying the multifractal formalism the Legendre transform of the $L^q$-spectrum is precisely the multifractal spectrum.  As such, one can bound the supremum of the upper local dimensions by the `top of the spectrum', defined by
\[
T(\mu)=\sup \left\{ s \ge 0 \colon \underline{\tau}(q) < sq \, \,, \forall q<0 \right\}
\]
 which is the gradient of the asymptote to $\underline{\tau}(q)$ as $q \rightarrow - \infty$.  We prove that the upper regularity dimension is bounded below by the top of the spectrum.

\begin{thm} \label{relationships}
Given a compactly supported Borel probability measure $\mu$ on $\mathbb{R}^d$, we have
\[
\begin{array}{ccccccccccc}
                                                         &&                  \sup_ {x\in \mathrm{supp}(\mu)}  \overline{\dim}_{\mathrm{loc}}(x,\mu)   \quad  \leq    \quad   T(\mu)                        & & &    \\
&                       \rotatebox[origin=c]{45}{$\leq$}          & &              \rotatebox[origin=c]{315}{$\leq$} & &   \\
 \overline{\dim}_\text{\emph{P}} \mu                                            & &        &&                  & \r.\\
&                \rotatebox[origin=c]{315}{$\leq$}              &&           \rotatebox[origin=c]{45}{$\leq$} & &  \\
                         &&                                            \dim_\text{\emph{P}} \mathrm{supp}(\mu)      \quad  \leq   \quad  \dim_\text{\emph{A}} \mathrm{supp}(\mu)                           & & &  
\end{array}
\]
\end{thm}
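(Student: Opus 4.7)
The diamond decomposes into six inequalities, four of which are essentially formal. The bound $\overline{\dim}_{\mathrm{P}} \mu \leq \sup_{x} \overline{\dim}_{\mathrm{loc}}(x,\mu)$ is immediate since an essential supremum is dominated by a supremum, and $\overline{\dim}_{\mathrm{P}} \mu \leq \dim_{\mathrm{P}} \mathrm{supp}(\mu)$ is a standard consequence of the mass distribution principle for packing dimension. The inequality $\dim_{\mathrm{P}} \mathrm{supp}(\mu) \leq \dim_{\mathrm{A}} \mathrm{supp}(\mu)$ is part of the classical set-dimension hierarchy, and $\dim_{\mathrm{A}} \mathrm{supp}(\mu) \leq \overline{\dim}_{\mathrm{reg}} \mu$ has already been recorded in Section \ref{intro}. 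The substantive content therefore lies in the two edges of the diagram that involve $T(\mu)$.

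For the upper bound $T(\mu) \leq \overline{\dim}_{\mathrm{reg}} \mu$, my plan is to fix $s > \overline{\dim}_{\mathrm{reg}} \mu$ and apply the defining inequality $\mu(B(x,R))/\mu(B(x,r)) \leq C (R/r)^{s}$ with $R$ chosen larger than the diameter of $\mathrm{supp}(\mu)$. Since $\mu$ is a probability measure this forces $\mu(B(x,R)) = 1$, yielding a uniform lower bound $\mu(B(x,r)) \geq c r^{s}$ on the support. Given a $2r$-separated family appearing in the definition of $M_r^q(\mu)$, disjointness of the balls $B(x_i,r)$ together with total mass at most $1$ caps the cardinality of the packing at $c^{-1} r^{-s}$, while each term is bounded above by $\mu(B(x_i,r))^q \leq c^q r^{sq}$ when $q < 0$. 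Multiplying these two bounds gives $M_r^q(\mu) \lesssim r^{s(q-1)}$ and hence $\underline{\tau}(q) \geq s(q-1)$ for all $q<0$. For any $t > s$ this estimate forces $\underline{\tau}(q) \geq tq$ whenever $q \leq -s/(t-s)$, so $t$ fails the defining condition of $T(\mu)$; using the elementary observation that the defining set is downward closed (if $s' < t$ and $\underline{\tau}(q) < tq$ for all $q<0$ then $\underline{\tau}(q) < tq < s'q$), we deduce $T(\mu) \leq t$, and letting $t \downarrow s$ and then $s \downarrow \overline{\dim}_{\mathrm{reg}} \mu$ gives the claim.

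For the lower bound $\sup_{x} \overline{\dim}_{\mathrm{loc}}(x,\mu) \leq T(\mu)$, I would fix $x \in \mathrm{supp}(\mu)$ and a value $d' < \overline{\dim}_{\mathrm{loc}}(x,\mu)$, and extract from the definition of $\limsup$ a sequence $r_k \downarrow 0$ with $\mu(B(x,r_k)) \leq r_k^{d'}$. Because the singleton $\{x\}$ is trivially admissible in the definition of $M^q_{r_k}(\mu)$, we have $M_{r_k}^q(\mu) \geq \mu(B(x,r_k))^q \geq r_k^{d'q}$ for $q < 0$. Dividing by $\log r_k < 0$ and passing to the liminf as $k \to \infty$ yields $\underline{\tau}(q) \leq d'q$ for every $q < 0$, which is precisely the condition that $d'$ lies in the defining set of $T(\mu)$. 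Hence $T(\mu) \geq d'$; sending $d' \uparrow \overline{\dim}_{\mathrm{loc}}(x,\mu)$ and then taking the supremum over $x$ closes the argument.

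The main point requiring care is the correct reversal of inequalities whenever one divides by the negative quantities $q$ or $\log r_k$, together with the sensible choice of $R > \mathrm{diam}(\mathrm{supp}(\mu))$ in the first argument so that the probability normalisation $\mu(B(x,R)) = 1$ can be invoked uniformly. Apart from this bookkeeping I anticipate no serious obstacle, since both arguments follow directly from the definitions of $\overline{\dim}_{\mathrm{reg}} \mu$, $M_r^q(\mu)$ and $T(\mu)$.
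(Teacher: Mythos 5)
Your argument is correct and follows essentially the same route as the paper: both nontrivial edges are obtained by estimating $M_r^q(\mu)$ for $q<0$, using the uniform lower bound $\mu(B(x,r))\geq cr^{s}$ supplied by the regularity dimension for the edge $T(\mu)\leq\overline{\dim}_{\textup{reg}}\mu$, and the singleton packing $\{x\}$ for the edge involving the upper local dimensions. The only cosmetic difference is that you bound the cardinality of a $2r$-separated family by $c^{-1}r^{-s}$ via disjointness and total mass $1$, whereas the paper uses the geometric bound $O(r^{-d})$ for packings of a compact subset of $\mathbb{R}^d$; both produce an additive error term that vanishes upon letting $q\to-\infty$.
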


Most of the inequalities in the above theorem are well-known or have already been established in the introduction.  The only two remaining are the  inequalities involving $T(\mu)$ and we prove these in Section \ref{spectrumproof}. We take some inspiration from \cite{fraser-jordan} where the $L^q$-spectrum was related to the infimum of the lower local dimensions.

One of the most powerful tools for providing lower bounds for the Assouad dimension of a set is the well-known result of Mackay and Tyson concerning weak tangents \cite[Proposition 6.1.5]{mackaytyson}.  In particular, the Assouad dimension of any weak tangent to a set cannot exceed the Assouad dimension of the set itself.  Given that the upper regularity dimension is the `Assouad dimension of a measure', it is natural to consider the analogous result in the setting of measures, where we replace convergence in the Hausdorff metric with weak convergence.  More precisely, we say that a Borel measure  $\hat{\mu}$ on  $ \mathbb{R}^d$ is a \emph{weak tangent measure} to  a locally finite Borel measure $\mu$ on $\mathbb{R}^d$ if there exists a sequence of similarity maps $T_k$ on  $\mathbb{R}^d$ and a sequence of positive renormalising numbers $p_k$ such that
\[
p_k \mu \circ T^{-1}_k  \rightharpoonup \hat{\mu}
\]
where $\rightharpoonup$ denotes weak convergence. Recall that a map $T$ on $\mathbb{R}^d$ is a \emph{similarity} if there is a constant $c  \in \left(0,1 \right)$ such that  for all $x,y \in \mathbb{R}^d$ we have $\lvert T(x)-T(y) \rvert= c\lvert x-y \rvert$.  We refer to $c$ as the \emph{similarity ratio}. 

\emph{Tangent measure} usually refers to a weak limit of magnifications of a given measure at a specific point, in contrast to weak tangent measures where the point of magnification can move around.   In the seminal work of Preiss \cite{preiss} tangent measures were allowed to have unbounded support and when one restricts the magnifications to a fixed compact set, the (compactly supported)  limit measures are  often called \emph{micromeasures}, following the important work of Furstenberg \cite{furstenberg} and Hochman \cite{hochman}.  

\begin{thm}\label{weaktangents}
Suppose $\hat{\mu}$ is a weak tangent measure to a locally finite Borel measure  $\mu$ on $\mathbb{R}^d$.  Then $\overline{\dim}_{\textup{reg}} \hat{\mu} \le \r $.
\end{thm}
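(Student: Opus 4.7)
The plan is to fix $s > \r$ with witnessing constant $C$ and show that the \emph{same} inequality $\hat{\mu}(B(\hat{x},R))/\hat{\mu}(B(\hat{x},r)) \le C(R/r)^s$ holds for $\hat{\mu}$ at every $\hat{x} \in \text{supp}(\hat{\mu})$ and every $0 < r < R$, whence sending $s \downarrow \r$ gives the theorem (the case $\r = \infty$ being vacuous). Writing $c_k \in (0,1)$ for the similarity ratio of $T_k$ and $\tilde{x}_k = T_k^{-1}(\hat{x})$, the fact that $T_k^{-1}$ maps $B(\hat{x},\rho)$ onto $B(\tilde{x}_k, \rho/c_k)$ gives the identity
\[
(p_k \mu \circ T_k^{-1})(B(\hat{x},\rho)) = p_k \mu(B(\tilde{x}_k, \rho/c_k))
\]
for every $\rho > 0$. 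The strategy is to transfer $\hat{\mu}$-masses near $\hat{x}$ to $\mu$-masses near $\tilde{x}_k$ by weak convergence, and then invoke the assumed regularity bound for $\mu$ at some point of $\text{supp}(\mu)$ close to $\tilde{x}_k$.

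Fix $\hat{x} \in \text{supp}(\hat{\mu})$, $0 < r < R$, and $\eta \in (0, r/2)$. At most countably many radii $\rho$ satisfy $\hat{\mu}(\partial B(\hat{x},\rho)) > 0$, so I may select $R' \in (R, R+\eta)$, $r' \in (r-\eta, r)$, and $\delta \in (0, r'/2)$ for which each of $\partial B(\hat{x}, R'), \partial B(\hat{x}, r'), \partial B(\hat{x}, \delta)$ is $\hat{\mu}$-null. The Portmanteau theorem then delivers genuine convergence
\[
p_k \mu(B(\tilde{x}_k, \rho/c_k)) \longrightarrow \hat{\mu}(B(\hat{x},\rho)) \qquad (\rho \in \{R', r', \delta\}).
\]
Since $\hat{x} \in \text{supp}(\hat{\mu})$ forces $\hat{\mu}(B(\hat{x}, \delta)) > 0$, the limit at $\rho=\delta$ is positive, so for all sufficiently large $k$ one may choose $y_k \in \text{supp}(\mu) \cap B(\tilde{x}_k, \delta/c_k)$. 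The triangle inequality then yields the nested inclusions
\[
B(\tilde{x}_k, R'/c_k) \subset B(y_k, (R' + \delta)/c_k), \qquad B(y_k, (r' - \delta)/c_k) \subset B(\tilde{x}_k, r'/c_k),
\]
so applying the defining regularity bound for $s$ at $y_k \in \text{supp}(\mu)$ produces
\[
\frac{p_k \mu(B(\tilde{x}_k, R'/c_k))}{p_k \mu(B(\tilde{x}_k, r'/c_k))} \le \frac{\mu(B(y_k,(R'+\delta)/c_k))}{\mu(B(y_k,(r'-\delta)/c_k))} \le C\left(\frac{R'+\delta}{r'-\delta}\right)^s.
\]
Letting $k \to \infty$ (the denominator converges to the positive quantity $\hat{\mu}(B(\hat{x},r'))$) and then $\eta \downarrow 0$ through admissible parameters leaves, by monotonicity in $R'$ and $r'$, the desired inequality.

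The genuinely delicate ingredient is that the natural comparison centre $\tilde{x}_k = T_k^{-1}(\hat{x})$ need not lie in $\text{supp}(\mu)$, which is the only place where the hypothesised ratio bound for $\mu$ is available. This is precisely why the auxiliary scale $\delta$ is introduced: weak convergence combined with $\hat{x} \in \text{supp}(\hat{\mu})$ forces positivity of $\mu(B(\tilde{x}_k, \delta/c_k))$, furnishing a genuine support point $y_k$ of $\mu$ within $\delta/c_k$ of $\tilde{x}_k$, at the cost of only a harmless inflation and deflation of the two radii that evaporates on sending $\delta \downarrow 0$. The remaining steps, including the standard trick of restricting to radii at which $\hat{\mu}$-spheres are null in order to sidestep the open/closed ball asymmetry of weak convergence, are routine.
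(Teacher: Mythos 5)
Your proof is correct and follows essentially the same route as the paper's: restrict to radii whose spheres are $\hat{\mu}$-null, transfer mass to $\mu$ via the similarity $T_k$, and repair the fact that $T_k^{-1}(\hat{x})$ need not lie in $\mathrm{supp}(\mu)$ by locating a nearby support point, which you extract from the positivity of $\hat{\mu}$ on small balls. Your execution is in fact slightly sharper---by passing to genuine limits in $k$ and then shrinking the auxiliary parameters $\eta$ and $\delta$ you recover the ratio bound with the original constant $C$, where the paper settles for crude factor-of-two estimates and a $4^{1+s}$ loss---but this does not change the substance of the argument.
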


We will prove this result in Section \ref{weaktangentsproof}.  An immediate corollary  is that weak tangent measures of doubling measures are doubling.  This observation generalises the folklore result that `tangent measures of doubling measures are doubling'.  

\begin{cor}
All weak tangent measures of  doubling Borel measures are doubling.
\end{cor}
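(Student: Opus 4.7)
The plan is essentially a one-line application of two ingredients already in hand. The first ingredient is the equivalence, asserted in the introduction (and proved in Section \ref{doublingproof}), that a locally finite Borel measure $\mu$ is doubling if and only if $\overline{\dim}_{\textup{reg}} \mu < \infty$. The second ingredient is Theorem \ref{weaktangents}, which guarantees that for any weak tangent measure $\hat{\mu}$ of $\mu$ one has $\overline{\dim}_{\textup{reg}} \hat{\mu} \leq \overline{\dim}_{\textup{reg}} \mu$.

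Putting these together, my proof would proceed as follows. Suppose $\mu$ is a doubling Borel measure on $\mathbb{R}^d$ and let $\hat{\mu}$ be a weak tangent measure to $\mu$. By the equivalence recorded above, $\overline{\dim}_{\textup{reg}} \mu < \infty$. Applying Theorem \ref{weaktangents} immediately yields
\[
\overline{\dim}_{\textup{reg}} \hat{\mu} \;\leq\; \overline{\dim}_{\textup{reg}} \mu \;<\; \infty,
\]
and invoking the equivalence once more (this time in the other direction) concludes that $\hat{\mu}$ is doubling.

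There is essentially no obstacle here: the corollary is a formal consequence of Theorem \ref{weaktangents} together with the characterisation of doubling via finiteness of $\overline{\dim}_{\textup{reg}}$. The only minor point worth flagging in the write-up is that the conclusion does not require any explicit comparison between the doubling constants of $\mu$ and $\hat{\mu}$; finiteness of the upper regularity dimension of $\hat{\mu}$ is enough to guarantee the existence of \emph{some} doubling constant for $\hat{\mu}$, and it is the work done in Theorem \ref{weaktangents} and in Section \ref{doublingproof} that bears all the technical weight.
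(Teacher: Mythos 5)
Your proposal is correct and matches the paper's own reasoning exactly: the corollary is stated there as an immediate consequence of Theorem \ref{weaktangents} combined with the characterisation of doubling as finiteness of the upper regularity dimension. Nothing further is needed.
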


In our definition of weak tangent measures we follow the strategy of Preiss, allowing the tangent to have unbounded support. However the weak tangent (sets) for Assouad dimension follow the conventions of Furstenberg and Hochman, restricting to a compact set. It turns out that this difference in approach is necessary as the dimension of a set cannot increase under restriction but the dimension of a measure can. The general problem of determining when a doubling measure restricts to a doubling measure on a compact set of positive mass is a subtle problem, see \cite{ojala}.  We provide a simple example of a doubling measure on the plane which upon restriction to the unit ball becomes non-doubling.  In particular, this shows that if, in the definition of weak tangent measures, one restricts the measures $p_k \mu \circ T^{-1}_k$ to the unit ball, the analogue of Theorem \ref{weaktangents} generally fails. This is because the restricted measure would become a weak tangent measure to the original measure.

We begin with the square $X_0 = [-3/2, 3/2]^2$ and cut out a sequence of open   `almost' semicircles as follows.   Let $\left\{r_i \right\}_{i\in \mathbb{N}}$ be a sequence of radii which decay exponentially to 0 and $\left\{x_i\right\}_{i\in \mathbb{N}}$ be a sequence of centres moving clockwise on  $S^1$ which converge polynomially to a limit on the opposite side of $S^1$ from $x_1$ (without making any full rotations). Consider the balls $B_i = B(x_i, r_i)$ and remove from $X_0$ the points in the interior of  $B_i \cap B(0,1)$ which are at distance strictly greater than $r_i^2$ from $S^1$, see the portion shown in grey on the left in Figure \ref{example}.  After all of these portions have been removed, label the remaining set as $X$.  The complement of $X$ is shown in grey on the right of Figure \ref{example}. 

\begin{figure}[h]
\centering
\includegraphics[width=0.9\textwidth]{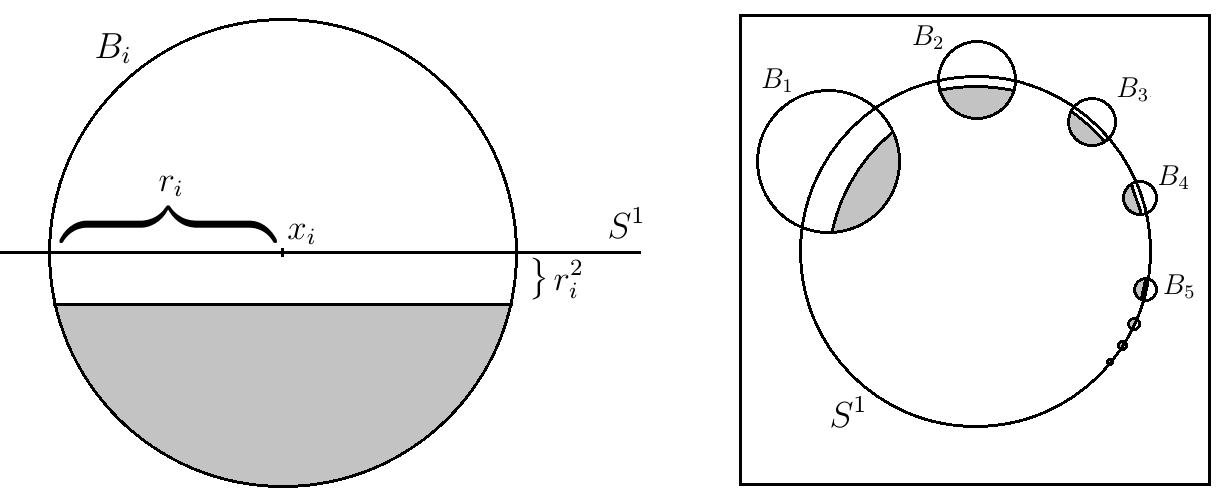}
\caption{Construction of a doubling measure which is not doubling after restricting to the unit ball}
\label{example}
\end{figure}

We observe  that 2-dimensional Lebesgue measure $\mu$ on $X$ is doubling, but that the restriction of $\mu$ to $B(0,1)$ is not doubling.  It is easy to see that there exists a uniform constant $c>0$ such that for $x \in X$ and $r\in (0,1)$ we have $cr^2 \leq \mu(B(x,r)) \leq \pi r^2$.  It follows that $\mu$ is doubling (with upper regularity dimension 2).  We now consider the restricted measure $\nu = \mu \vert_{B(0,1)}$ and compare the masses of $B(x_i,r_i)$ and $B(x_i,2r_i)$. For large enough $i$ we have $\nu (B(x_i,2r_i)) \ge (\pi (2r_i)^2-\pi r_i^2)/3 = \pi r_i^2$  and $\nu (B(x_i,r_i)) \le 4 r_i^3$. Therefore
\[
\frac{\nu(B(x_i,2r_i))}{\nu(B(x_i,r_i))} \ge \frac{\pi r_i^2}{4r_i^3} =  \frac{\pi}{4r_i} \to \infty
\]
and so  $\nu$ is not doubling.

\subsection{Self-similar measures}\label{self-similarresult}

In this section we compute the upper regularity dimension of self-similar measures satisfying the strong separation condition.  We emphasise that this separation assumption is natural because self-similar measures not satisfying the strong separation condition are typically not doubling and so have upper regularity dimension equal to $+\infty$.

Let  $\mathcal{I}$ be a finite index set and $\left\{S_i \right\}_{i \in \mathcal{I}}$ be a finite collection of  contraction maps on a compact subset of $\mathbb{R}^d$.  Such a collection is known as an iterated function system (IFS).  Also let $\{p_i\}_{i \in \mathcal{I}}$ be a collection of probabilities associated with the maps $\{S_i\}_{i \in \mathcal{I}}$, i.e. we assume that for each $i \in \mathcal{I}$ we have $p_i>0$ and $\sum_{i \in \mathcal{I}} p_i = 1$. There is a  unique non-empty compact set $F$ satisfying 
\[
F=\displaystyle\bigcup_{i\in \mathcal{I}} S_i(F)
\]
and  a unique Borel probability measure $\mu$ satisfying
\[
\mu = \sum_{i \in \mathcal{I}} p_i \mu \circ S_i^{-1}
\]
which is fully supported on $F$, see \cite[Chapter 9]{falconer} and the references therein.  When all of the contractions $S_i$ are similarities, with similarity ratio $c_i \in \left(0,1 \right)$, then $F$ is called a \emph{self-similar set} and $\mu$ is called a \emph{self-similar measure}.  We refer the reader to \cite{falconer} for a more in depth discussion of IFSs and self-similar sets and measures.

\begin{figure}[h]
\centering
\begin{subfigure}{0.3\textwidth}
  \centering
  \includegraphics[width=0.85\linewidth]{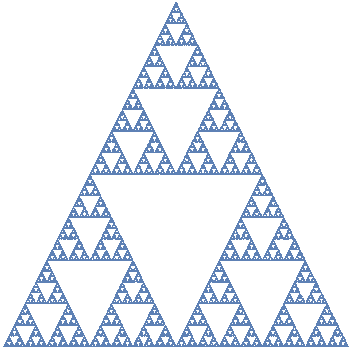}
\end{subfigure}%
\begin{subfigure}{.3\textwidth}
  \centering
  \includegraphics[width=.9\linewidth]{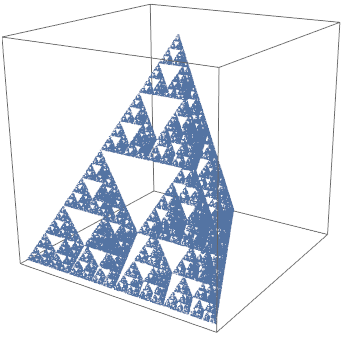}
\end{subfigure}%
\begin{subfigure}{.3\textwidth}
  \centering
  \includegraphics[width=.9\linewidth]{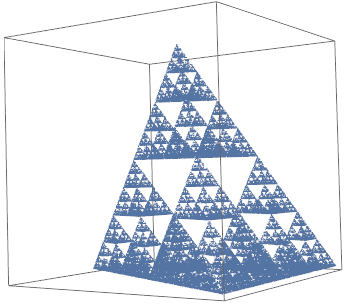}
\end{subfigure}
\caption{A self-similar Sierpi\'nski triangle and two perspectives of a self-similar Sierpi\'nski tetrahedron.}
\label{fig:test}
\end{figure}

We say that the IFS (and associated set $F$ and measure $\mu$) satisfy the \emph{strong separation condition (SSC)} if $S_i(F) \cap S_j(F) = \emptyset$ for all distinct $i,j \in \mathcal{I}$.  This is a natural assumption in the context of the upper regularity dimension.  For example, if the defining IFS consists of the maps $x \mapsto x/2$ and $x \mapsto x/2+1/2$, then the SSC is not satisfied and one easily verifies that $\mu$ is doubling if and only if both probabilities are equal to 1/2 and in this case $\mu$ is Ahlfors-David 1-regular (it is Lebesgue measure on the unit interval).

\begin{thm}\label{selfsimilar}
Let $\mu$ be a self-similar measure as defined above and assume $\mu$ satisfies the SSC.  Then 
\[
\r = \sup_x \overline{\dim}_{\textup{loc}}(x,\mu) = T(\mu) = \max_{i \in \mathcal{I}} \frac{\log p_i}{\log c_i}.
\]
\end{thm}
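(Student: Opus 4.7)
The plan is to use Theorem~\ref{relationships}, which already supplies the chain $\sup_x \overline{\dim}_{\textup{loc}}(x,\mu) \leq T(\mu) \leq \r$, so the task reduces to establishing the lower bound $\sup_x \overline{\dim}_{\textup{loc}}(x,\mu) \geq s^*$ and the matching upper bound $\r \leq s^*$, where I abbreviate $s^* := \max_{i\in\mathcal{I}} \log p_i/\log c_i$.

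For the lower bound, I would pick $i^* \in \mathcal{I}$ realising the maximum and take $x^*$ to be the fixed point of $S_{i^*}$. Writing $D = \mathrm{diam}(F)$ and letting $\delta > 0$ be the SSC separation constant so that $d(S_i(F), S_j(F)) \geq \delta D$ for $i \neq j$, a straightforward induction on levels (using that $S_{i^*}^{m}$ is a similarity of ratio $c_{i^*}^{m}$) shows that $S_{i^*}^{n}(F)$ is separated from every other generation-$n$ cylinder by at least $\delta c_{i^*}^{n-1} D$. Taking $r_n := \delta c_{i^*}^{n-1} D / 2$ therefore forces $B(x^*, r_n) \cap F \subseteq S_{i^*}^{n}(F)$, whence $\mu(B(x^*, r_n)) \leq p_{i^*}^{n}$. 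Since $p_{i^*} = c_{i^*}^{s^*}$ by definition of $s^*$, sending $n \to \infty$ along $r_n$ yields $\overline{\dim}_{\textup{loc}}(x^*, \mu) \geq s^*$.

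For the upper bound, fix $x \in F$ and $0 < r < R < D$ and let $\mathbf{w}(x) = w_1 w_2 \ldots$ be the unique symbolic coding of $x$ afforded by SSC. I would define $\mathbf{j}$ to be the shortest prefix of $\mathbf{w}(x)$ with $c_{\mathbf{j}} D < r$ and $\mathbf{i}$ to be the longest prefix of $\mathbf{w}(x)$ with $B(x,R) \cap F \subseteq S_{\mathbf{i}}(F)$. Cylinder containment gives $\mu(B(x,R)) \leq p_{\mathbf{i}}$ and $\mu(B(x,r)) \geq p_{\mathbf{j}}$; minimality of $\mathbf{j}$ forces $c_{\mathbf{j}} \geq c_{\min}\, r / D$, while maximality of $\mathbf{i}$ combined with SSC forces $B(x,R)$ to meet a sibling of the next-longer prefix, yielding $c_{\mathbf{i}} \leq 2R/(\delta D)$. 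Both $\mathbf{i}$ and $\mathbf{j}$ are prefixes of $\mathbf{w}(x)$ and therefore comparable; in the case $|\mathbf{i}| \leq |\mathbf{j}|$ I write $\mathbf{j} = \mathbf{i}\mathbf{k}$ and use the key inequality $p_{\mathbf{k}} \geq c_{\mathbf{k}}^{s^*}$, which follows from the identity $p_i = c_i^{s_i}$, the fact that $c_i \in (0,1)$ and $s_i \leq s^*$ (so $p_i \geq c_i^{s^*}$), and multiplicativity along words, to obtain
\[
\frac{\mu(B(x,R))}{\mu(B(x,r))} \;\leq\; \frac{p_{\mathbf{i}}}{p_{\mathbf{j}}} \;=\; \frac{1}{p_{\mathbf{k}}} \;\leq\; c_{\mathbf{k}}^{-s^*} \;\leq\; \left(\frac{2}{c_{\min}\delta}\right)^{s^*} \left(\frac{R}{r}\right)^{s^*};
\]
in the case $|\mathbf{i}| > |\mathbf{j}|$ the ratio is trivially at most $1 \leq (R/r)^{s^*}$.

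The main technical care lies in choosing the cylinder pair $(\mathbf{i}, \mathbf{j})$ to lie along the same branch of the cylinder tree and in tracking the geometric constants produced by SSC; the heart of the argument, however, is the clean observation that $p_i \geq c_i^{s^*}$ for every $i$, which converts the extremal exponent $s^*$ into a uniform lower bound on cylinder measures and delivers the matching $(R/r)^{s^*}$ upper bound on the ball ratio.
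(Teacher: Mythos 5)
Your proposal is correct and takes essentially the same route as the paper: the lower bound is obtained at the fixed point of the extremal map $S_{i^*}$ via the SSC separation constant, and the upper bound sandwiches balls between cylinders along the coding of $x$ and then uses $p_i \geq c_i^{s^*}$ letter by letter, which is exactly the paper's key step $1/p_i = (1/c_i)^{\log p_i/\log c_i} \leq (1/c_i)^{s^*}$. The only cosmetic difference is that you control $\mu(B(x,R))$ by the maximal cylinder containing $B(x,R)\cap F$ (bounding its scale directly from the SSC), whereas the paper first passes from $B(x,R)$ to $B(x,\delta R/2)$ using a doubling constant and the cylinder at level $n(x,R)$; both devices yield the same comparison up to constants.
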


The Assouad dimension of the self-similar set which supports $\mu$ is generally strictly smaller than the upper regularity dimension of $\mu$ in this setting.  In fact the only case where the Assouad dimension and upper regularity dimensions coincide is when  $p_i=c_i^s$ where $s$ is the unique solution of $\sum_{i \in \mathcal{I}} c_i^s = 1$.   In this case $\mu$ is Ahlfors-David $s$-regular and all of the notions of dimension for $F$ and $\mu$ coincide and equal $s$.

\subsection{Self-affine measures}\label{self-affineresults}

In this section we consider an important class of self-affine measures.  Self-affine measures are defined in a similar way to the self-similar measures considered in the previous section, the only difference being that the defining contractions are assumed to be affinities rather than similarities.  In general, such measures are much more difficult to handle due to the fact that different rates of distortion can occur in different directions.  The specific class of self-affine measures we consider are those supported on Bedford-McMullen carpets, see \cite{bedford, mcmullen}, and on the higher dimensional analogues, the Bedford-McMullen sponges, see \cite{kenyonperes, sponges}.  These sets (and measures) came to prominence recently when they were used by Das and Simmons to provide a counter example to an important and long-standing conjecture in dynamical systems \cite{das-simmons}. In particular, there exists a surprising example of a sponge in $\mathbb{R}^3$ whose Hausdorff dimension cannot be approximated by the Hausdorff dimension of measures invariant under the natural associated dynamical system.  We assume the measures satisfy the \emph{very strong separation condition}, which was used by Olsen in \cite{sponges}.  Again, this is the natural condition to assume in the context of upper regularity dimension because without this assumption the self-affine measures tend not to be doubling, see \cite{doublingcarpets, fraser-howroyd}.

Let $d \geq 2$ be an integer and fix integers $1<n_1 < n_2 \cdots < n_d$.  Choose a subset $\mathcal{I}$ of $\prod_{l=1}^{d} \left\lbrace 0,\ldots, n_l-1 \right\rbrace$ and for $\textbf{i}=(i_1, \ldots, i_d)\in \mathcal{I} $  let $S_{\textbf{i}} \colon [0,1]^d \rightarrow [0,1]^d$ be defined by
\[
S_{\textbf{i}}(x_1,x_2\ldots, x_d)= \left( \frac{x_1+i_1}{n_1}, \frac{x_2+i_2}{n_2}, \ldots, \frac{x_d+i_d}{n_d} \right) .
\]
 Finally, consider the IFS $\{S_i\}_{i \in \mathcal{I}}$  acting on $[0,1]^d$ and let $\{p_i\}_{i \in \mathcal{I}}$ be an associated probability vector as before.  Let $F$ be the associated attractor of this IFS, which is a self-affine set since each of the defining contractions is an affinity, and let $\mu$ be the associated self-affine measure.  We can now state the separation condition we require, which we note  is  strictly stronger than the SSC.

\begin{figure}[h]
\centering
\begin{subfigure}{0.35\textwidth}
  \centering
  \includegraphics[width=0.8\linewidth]{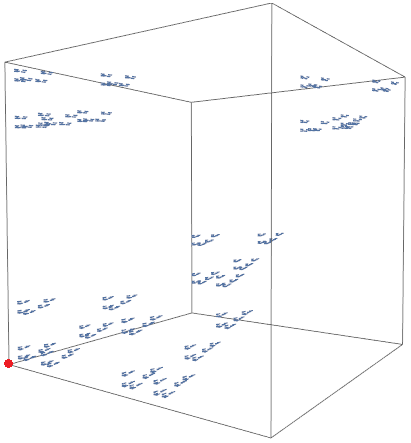}
  \label{fig:sub1}
\end{subfigure}%
\begin{subfigure}{0.35\textwidth}
  \centering
  \includegraphics[width=0.85\linewidth]{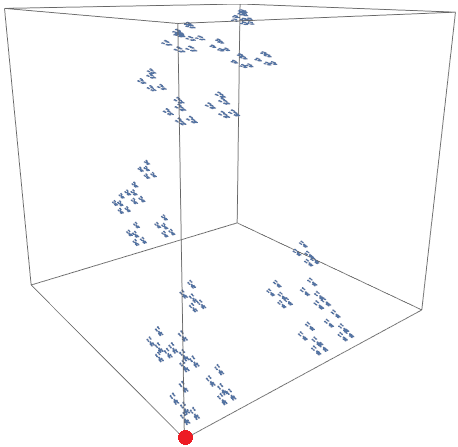}
  \label{fig:sub2}
\end{subfigure}%
\begin{subfigure}{0.35\textwidth}
  \centering
  \includegraphics[width=0.85\linewidth]{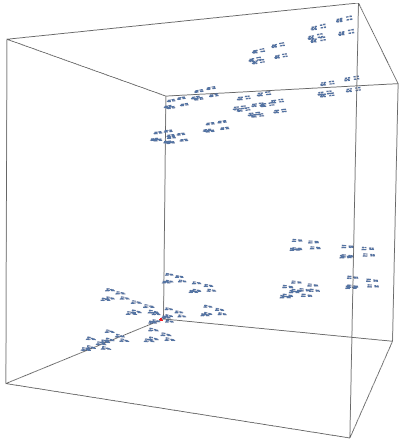}
  \label{fig:sub2}
\end{subfigure}
\caption{Three perspectives of the self-affine sponge defined by the data: $d=3$,  $n_1=3$, $n_2=4$, $n_3=5$ and  $\mathcal{I} = \{ (0,0,0),(0,2,0),(2,1,1)$, $(2,3,4)$, $(0,0,4) \}$. The origin is marked by a red dot to indicate orientation.}
\label{fig:test}
\end{figure}

\begin{defn}[VSSC, \cite{sponges}]
\emph{A self-affine sponge }$F$\emph{ (associated to an index set }$\mathcal{I}$)\emph{ satisfies the }very strong separation condition (VSSC)\emph{ if the following condition  holds. If }$l=1, \ldots, d$\emph{ and }$(i_1, \ldots, i_d)$, $(j_1 ,\ldots, j_d)\in \mathcal{I}$\emph{ satisfy }$i_1=j_1, \ldots,  i_{l-1}=j_{l-1}$\emph{ and }$i_l \neq j_l$,\emph{ then }$\lvert i_l - j_l \rvert >1$.
\end{defn} 

Before we state our result, we need to introduce some more notation.  For $l=1, \dots, d$ and $\textbf{i}=(i_1, \ldots, i_d)\in \mathcal{I} $  let 
\[
p_l(\mathbf{i})=p (i_l \vert i_1, \ldots , i_{l-1})=\frac{\displaystyle\sum_{\substack{\textbf{j}=\left( j_1, \ldots, j_d\right)\in \mathcal{I} \\ j_1=i_1, \ldots, j_{l-1}=i_{l-1}, j_l=i_l}}p_{\textbf{j}}}{\displaystyle\sum_{\substack{\textbf{j}=\left( j_1, \ldots, j_d\right)\in \mathcal{I} \\ j_1=i_1, \ldots, j_{l-1}=i_{l-1}}}p_{\textbf{j}}}
\]
if $(i_1, \ldots, i_l, i_{l+1},\ldots, i_d) \in \mathcal{I}$ for some $i_{l+1},\ldots, i_d$ and $0$ otherwise.  These numbers have a clear interpretation: $p_l(\mathbf{i})$ is the conditional probability that the $l$th digit of an element of $\mathcal{I}$  coincides with the $l$th digit of $\mathbf{i}$, given that the first $l-1$ coordinates did.  Note that when $l=1$ we are conditioning on the entire space and so the denominator of the above conditional probability is taken to be 1.

\begin{thm}\label{carpet}
Let $\mu$ be a self-affine measure on a Bedford-McMullen sponge satisfying the VSSC.  Then
\[
\r =\sum_{l=1}^d \max_{\mathbf{i}\in \mathcal{I}}\frac{-\log p_l(\mathbf{i})}{\log n_l}.
\]
\end{thm}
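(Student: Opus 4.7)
The plan is to prove both directions of the equality using the \emph{approximate cube} machinery for Bedford--McMullen sponges. For a point $x = \pi(\omega) \in F$ with $\omega = \mathbf{i}_1\mathbf{i}_2\cdots \in \mathcal{I}^{\mathbb{N}}$ and a scale $r > 0$, set $k_l(r) = \lfloor -\log r / \log n_l \rfloor$ and let $Q(x,r)$ denote the approximate cube around $x$ obtained by specifying the first $k_l(r)$ digits of $\omega$ in direction $l$. Under the VSSC, distinct approximate cubes at a common scale are well separated, hence $\mu(B(x,r)) \asymp \mu(Q(x,r))$ uniformly in $x$ and $r$. A short computation using $p_{\mathbf{i}} = \prod_{l=1}^d p_l(\mathbf{i})$ together with the nested structure of the specified coordinates gives the key identity
\[
\mu(Q(x,r)) = \prod_{l=1}^d \prod_{j=1}^{k_l(r)} p_l(\mathbf{i}_j),
\]
so that for $0 < r < R$,
\[
\frac{\mu(B(x,R))}{\mu(B(x,r))} \asymp \prod_{l=1}^d \prod_{j=k_l(R)+1}^{k_l(r)} p_l(\mathbf{i}_j)^{-1}.
\]

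The upper bound $\r \le \sum_l \max_{\mathbf{i}}(-\log p_l(\mathbf{i}))/\log n_l$ is then immediate: bound each factor by $(\min_{\mathbf{i}} p_l(\mathbf{i}))^{-1}$ and use $k_l(r) - k_l(R) \le \log(R/r)/\log n_l + 1$ to obtain
\[
\log\frac{\mu(B(x,R))}{\mu(B(x,r))} \le \log(R/r) \cdot \sum_{l=1}^d \max_{\mathbf{i} \in \mathcal{I}}\frac{-\log p_l(\mathbf{i})}{\log n_l} + O(1).
\]

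For the matching lower bound the key observation is that one can choose scales $R > r$ so that the ``refinement ranges'' $J_l := (k_l(R), k_l(r)]$ for different $l$ are \emph{pairwise disjoint} in the level variable $j$. Writing $R = n_d^{-m}$ and $r = n_d^{-k}$, so $k_l(r) \approx k \log n_d/\log n_l$ is decreasing in $l$, the disjointness condition $k_l(r) < k_{l-1}(R)$ reduces to
\[
\frac{m}{k} > \alpha := \max_{2 \le l \le d}\frac{\log n_{l-1}}{\log n_l} \in (0,1).
\]
Fix any $\beta \in (\alpha, 1)$, set $m = \lfloor \beta k \rfloor$, and for each $l$ choose $\mathbf{i}^{(l)} \in \mathcal{I}$ attaining $\min_{\mathbf{i}} p_l(\mathbf{i})$. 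Construct a word $\omega^{(k)}$ by setting $\mathbf{i}_j = \mathbf{i}^{(l)}$ for $j \in J_l$ (well-defined since the $J_l$ are disjoint) and arbitrary elsewhere, and let $x^{(k)} = \pi(\omega^{(k)})$. Then
\[
\log\frac{\mu(B(x^{(k)},R))}{\mu(B(x^{(k)},r))} \gtrsim \sum_{l=1}^d |J_l| (-\log \min_{\mathbf{i}} p_l(\mathbf{i})) \asymp \log(R/r) \cdot \sum_{l=1}^d \max_{\mathbf{i}}\frac{-\log p_l(\mathbf{i})}{\log n_l}.
\]
Since $R/r = n_d^{k(1-\beta)} \to \infty$ as $k\to\infty$, this forces $\r \ge \sum_l \max_{\mathbf{i}}(-\log p_l(\mathbf{i}))/\log n_l$.

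The main technical subtlety is the identification of the decoupling regime $m/k > \alpha$. Without disjointness of the $J_l$, a single digit $\mathbf{i}_j$ would have to simultaneously minimise several of the $p_l(\cdot)$, which is not possible in general, and the naive lower bound construction fails to reach the sum of per-direction maxima. Isolating the correct asymptotic scale ratio that decouples the directions is the crux of the lower bound argument.
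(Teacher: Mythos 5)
Your proposal is correct and follows essentially the same route as the paper: the same approximate-cube measure formula, the same per-digit bound for the upper estimate, and the same decoupling of the refinement ranges $(k_l(R),k_l(r)]$ via a carefully chosen scale ratio for the lower bound (your condition $m/k>\alpha$ is equivalent to the paper's requirement $(n_lR)^{\log n_{l+1}/\log n_l}<r<R/n_d$). The only step to phrase more carefully is the reduction $\mu(B(x,r))\asymp\mu(Q(x,r))$, which as literally stated presupposes doubling; the standard fix, used in the paper, is to sandwich the ball ratio between cube ratios at comparable scales, which changes the final bound only by a constant since only boundedly many extra digits are specified.
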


Formulae for $\sup_x \overline{\dim}_{\text{loc}}(x,\mu)$ and $T(\mu)$ for the self-affine measures $\mu$ we consider in this section can be found in \cite{sponges}, where the notation $\overline{a}$ and $\overline{A}$ was used, respectively.    Also, the Assouad dimension of these sponges was first computed by Mackay \cite{mackay} in the case $d=2$ and in \cite{fraser-howroyd} for general $d$.

We will now discuss a family of examples designed to demonstrate that all of the notions of dimension we discuss here can be distinct for self-affine measures.  In particular, the upper regularity dimension can be strictly greater than the Assouad dimension, supremum of the upper local dimensions and the `top of the spectrum', $T(\mu)$.  This behaviour was \emph{not} seen in the self-similar case.

Let $d=2$, $n_1=3$, $n_2=4$,  $\mathcal{I}=\left\{(0,2),(2,1),(2,3)\right\}$ and $p_{(0,2)}=\epsilon,\, p_{(2,1)}=1-3\epsilon/2$ and $p_{(2,3)}=\epsilon/2$ where we allow $\epsilon$ to vary in the interval $(0,1/2]$.  We write $F$ for the self-affine carpet and $\mu$ for the self-affine measure associated with this data.  Observe that the VSSC is satisfied and so our results apply.  Theorem \ref{carpet} yields
\[
\r = \frac{-\log \epsilon}{\log 3}+ \frac{-\log \frac{\epsilon/2}{1-\epsilon} }{\log 4},
\]
Mackay's result \cite{mackay} gives
\[
\dim_{\text{A}} F = \frac{\log 2}{ \log 3} + \frac{ \log 2}{ \log 4}, 
\]
and, using results from \cite{sponges},
\[
\sup_{x\in F} \overline{\dim}_{\text{loc}}(x,\mu)=\max\left\{ \frac{-\log \epsilon}{\log 3} , \frac{-\log (1-\epsilon)}{\log 3}+\frac{-\log \frac{\epsilon/2}{1-\epsilon} }{\log 4}\right\},
\]
(which has a phase transition at $\epsilon \approx 0.066$), and
\[
T(\mu)= \frac{-\log \epsilon }{\log 3} +\frac{\log 2}{\log 4} .
\]
For $\epsilon \in (0,1/2)$  these quantities are all distinct and for $\epsilon=1/2$ the measure is the `coordinate uniform measure' from \cite{fraser-howroyd} which has upper regularity dimension precisely equal to the Assouad dimension.

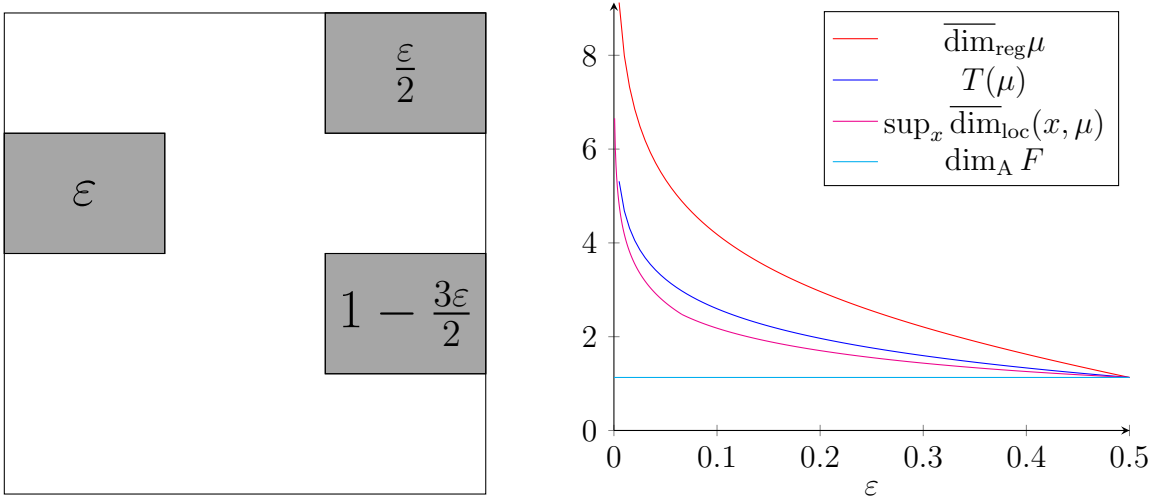
\begin{figure}[H]
\centering
\begin{minipage}{.5\textwidth}
  \centering
\begin{tikzpicture}[scale=0.8]
\coordinate (O) at (0,0);
\coordinate (A) at (\Width,0);
\coordinate (B) at (\Width,\Height);
\coordinate (C) at (0,\Height);
\coordinate (C1) at (0,\Height*2/4);
\coordinate (C2) at (0,\Height*3/4);
\coordinate (C3) at (\Width/3,\Height*2/4);
\coordinate (C4) at (\Width/3,\Height*3/4);
\coordinate (A1) at (\Width*2/3,\Height/4);
\coordinate (A2) at (\Width*2/3,\Height*2/4);
\coordinate (A3) at (\Width,\Height/4);
\coordinate (A4) at (\Width,\Height*2/4);
\coordinate (D1) at (\Width*2/3,\Height*3/4);
\coordinate (D2) at (\Width*2/3,\Height);
\coordinate (D3) at (\Width,\Height*3/4);
\coordinate (D4) at (\Width,\Height);
\draw[black] (O) -- (A) -- (B) -- (C) -- cycle;
\draw[black,fill=gray!70] (C1) -- (C2) -- (C4) -- (C3) -- cycle;
\draw (C1) rectangle (C4) node[pos=.5] {\LARGE$\varepsilon$};
\draw[black,fill=gray!70] (A1) -- (A2) -- (A4) -- (A3) -- cycle;
\draw (A1) rectangle (A4) node[pos=.5] {\LARGE$ 1 -  \frac{3\varepsilon}{2}$};
\draw[black,fill=gray!70] (D1) -- (D2) -- (D4) -- (D3) -- cycle;
\draw (D1) rectangle (D4) node[pos=.5] {\LARGE $\frac{\varepsilon}{2}$};
\end{tikzpicture}

\end{minipage}%
\begin{minipage}{.5\textwidth}
  \centering

\begin{tikzpicture}[yscale=1]
\begin{axis}[
    axis lines = left,
    xlabel = $\varepsilon$,
]
\addplot [
    domain=0:0.5, 
    samples=100, 
    color=red,
]
{(-ln(x))/(ln(3)) + (-ln((x/2)/(1-x)))/(ln(4)) };
\addlegendentry{$\r$}

\addplot [
    domain=0:0.5, 
    samples=100, 
    color=blue,
    ]
    {(-ln(x))/(ln(3)) + ln(2)/ln(4)};
\addlegendentry{$T(\mu)$}

\addplot [
    domain=0.066:0.5, 
    samples=100, 
    color=magenta,
]
{- ln((x/2)/(1-x))/ln(4)- ln(1-x)/ln(3)  };
\addlegendentry{$\sup_x \l$}

\addplot [
    domain=0:0.5, 
    samples=100, 
    color=cyan,
]
{(ln(2))/(ln(3)) + (ln(2))/(ln(4)) };
\addlegendentry{$\dim_\text{A} F$}

\addplot [
    domain=0:0.5, 
    samples=100, 
    color=black,
]
{0 };

\addplot [
    domain=0:0.066, 
    samples=100, 
    color=magenta,
]
{  ln(1/x)/ln(3)   };

\end{axis}
\end{tikzpicture}

\end{minipage}

  \caption{Left: the affine maps and associated probabilities.  Right: a plot showing the four different `dimensions' as $\epsilon$ varies.}
  \label{fig:badcarpet}
\end{figure}

\subsection{Measures on sequences}

When one first meets the Assouad dimension, the first interesting example is often that the set $\left\{1/n \right\}_{n\in \mathbb{N}}$ has Assouad dimension 1, which is strictly larger than the upper and lower box dimensions which are both 1/2.  Since this example is so prevalent, we decided to investigate the upper regularity dimensions of natural families of measures supported on such sets. For simplicity we restrict our examples to countable subsets of $[0,1]$ with one accumulation point at 0 and measures equal to the sum of decaying point masses on the elements of the set. The interplay between the rate of convergence of the points in the set and the rate of decay of the point masses will turn out to be paramount to understanding the dimension of the measure, and to emphasise this we provide exact results for some simple cases where the rates of convergence are either polynomial or exponential.  However, it could be interesting in the future to study  sequences with other decay rates, for example  stretched exponential decay $a^{n^b}$ for $a,b \in (0,1)$, $n \in \mathbb{N}$.

More concretely, consider the set $\left\{ x_n \ : \ n \in \mathbb{N}\right\}$ where  $x_n  \searrow 0$ and the sequence of weights $\left\{ p(n) \ : \ n \in \mathbb{N} \right\}$ where $p(n) \searrow 0$, and $\sum_{n=1}^\infty p(n) < \infty$.  The measure we are interested in is
\[
\mu = \frac{1}{\sum_{n=1}^\infty p(n) } \sum_{n=1}^\infty p(n)\delta_{x_n} 
\]
where $\delta_{x_n} $ is a point mass at $x_n$.

\begin{thm}\label{sequences}
Let $\mu$ be as above.
\begin{enumerate}
\item Polynomial-polynomial: Let $\lambda > 0$ and $\omega > 1$ and suppose $x_n = n^{-\lambda}$ and $p(n)=n^{-\omega}$.  Then
\[
\r \ = \  \max \left\{1, \, \frac{\omega-1}{\lambda}\right\} \  =\  \max \left\{\a \textup{supp} (\mu), \, \sup_x \overline{\dim}_{\textup{loc}}(x,\mu)\right\}.
\]
\item Exponential-exponential: Let $\lambda, \omega \in (0,1)$ and suppose $x_n= \lambda^{n}$ and $p(n)=\omega^{n}$.  Then
\[
\r \ = \  \frac{\log \omega}{\log \lambda}  \ = \ \sup_x \overline{\dim}_{\textup{loc}}(x,\mu) \ >  \     0 \ = \ \a \textup{supp} (\mu) .
\]
\item Mixed rates: If
\begin{enumerate}
\item[(i)] $x_n = n^{-\lambda}$ $(\lambda >0)$ and $p(n)=\omega^{n}$ $(0< \omega < 1)$; or
\item[(ii)]  $x_n =  \lambda^{n}$ $(0< \lambda < 1)$ and $p(n)=n^{-\omega}$ $(\omega >1)$,
\end{enumerate}
then $\mu$ is not doubling, and so  $\r = \infty$.
\end{enumerate}
\end{thm}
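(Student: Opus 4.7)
The plan is to compute $\mu(B(x,r))$ directly for every $x\in \mathrm{supp}(\mu)$ and every $r>0$, and then read off the upper regularity dimension. For parts (1) and (2) the lower bounds come for free from Theorem~\ref{relationships} via $\dim_{\text{A}} \mathrm{supp}(\mu)$ and $\sup_x \overline{\dim}_{\mathrm{loc}}(x,\mu)$, so the essence of the proof is the matching upper bound, a direct verification of $\mu(B(x,R))/\mu(B(x,r)) \leq C(R/r)^s$ by case analysis. For part (3) we instead show that $\mu$ fails to be doubling, which by the equivalence recorded in Section~\ref{dimension} forces $\overline{\dim}_{\textup{reg}} \mu = +\infty$.

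For part (1), the usual integral-comparison gives $\mu(B(0,r)) \asymp r^{(\omega-1)/\lambda}$, so $\overline{\dim}_{\mathrm{loc}}(0,\mu)=(\omega-1)/\lambda$, and $\dim_{\text{A}}\mathrm{supp}(\mu)=1$ for $\{n^{-\lambda}\}\cup\{0\}$ is standard. The crucial further input is $\mu(B(x_n,r))$, which I estimate in three regimes dictated by the microscale $n^{-\lambda-1}$ (the spacing between neighbouring points near $x_n$) and the macroscale $x_n = n^{-\lambda}$: for $r \lesssim n^{-\lambda-1}$ one has $\mu(B(x_n,r)) \asymp n^{-\omega}$; for $n^{-\lambda-1} \lesssim r \lesssim x_n$ the ball contains $\asymp r n^{\lambda+1}$ neighbouring atoms each of mass $\asymp n^{-\omega}$, so $\mu(B(x_n,r)) \asymp r\, n^{\lambda+1-\omega}$; and for $r \gtrsim x_n$ the ball absorbs all points near $0$ and $\mu(B(x_n,r)) \asymp r^{(\omega-1)/\lambda}$. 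These expressions are continuous across the regime boundaries, and the inequality $\mu(B(x_n,R))/\mu(B(x_n,r)) \leq C(R/r)^s$ with $s = \max\{1,(\omega-1)/\lambda\}$ is then checked in each of the nine combinations of regimes for $r$ and $R$.

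Part (2) is structurally similar but cleaner: since $x_n=\lambda^n$, the local spacing near $x_n$, and the atomic mass $\omega^n$ all scale as powers of $\lambda^n$, a geometric-series computation yields $\mu(B(0,r)) \asymp r^{\log\omega/\log\lambda}$, giving both the lower bound (note $\dim_{\text{A}}\mathrm{supp}(\mu)=0$ in this case) and the form of the upper bound; $\mu(B(x_n,r))$ is a constant multiple of $\omega^n$ when $r$ lies below the spacing and is $\asymp r^{\log\omega/\log\lambda}$ otherwise, so the required inequality follows by substitution.

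For part (3)(i), iterating the geometric tail gives $\mu(B(0,r)) \asymp \omega^{\lceil r^{-1/\lambda}\rceil}$, which decays stretched-exponentially in $r$, so $\overline{\dim}_{\mathrm{loc}}(0,\mu)=+\infty$ and the lower bound from Section~\ref{dimension} forces $\overline{\dim}_{\textup{reg}}\mu=+\infty$. In part (3)(ii) both $\dim_{\text{A}}\mathrm{supp}(\mu)$ and $\sup_x\overline{\dim}_{\mathrm{loc}}(x,\mu)$ vanish, so I argue non-doubling by hand: taking $x=x_n$, $r_n=(1-\lambda)\lambda^{n+1}/2$ (small enough that $B(x_n,r_n)\cap \mathrm{supp}(\mu)=\{x_n\}$), and $R_n=\lambda^{n-1}$, one has $R_n/r_n$ constant in $n$, while $\mu(B(x_n,r_n)) \asymp n^{-\omega}$ and $\mu(B(x_n,R_n)) \gtrsim n^{1-\omega}$, so $\mu(B(x_n,R_n))/\mu(B(x_n,r_n)) \asymp n \to \infty$, precluding any finite regularity exponent. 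The main obstacle is the bookkeeping in part~(1), especially the crossover regime where $r$ sits below the microscale but $R$ has already crossed the macroscale; there matching the scaling exponents requires using simultaneously the critical exponents $\omega-\lambda-1$ and $(\omega-1)/\lambda-1$, which switch sign together exactly at $\omega=\lambda+1$ and select which of the two candidate values $1$ and $(\omega-1)/\lambda$ is active.
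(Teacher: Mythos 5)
Your proposal is correct and follows essentially the same route as the paper: both arguments reduce to direct asymptotics for $\mu(B(x,r))$ (yours organised by scale regimes relative to the local spacing $n^{-\lambda-1}$ and the position $x_n$, the paper's by integral comparison of $\sum p(i)$ over the index window together with a three-way case split on whether the ball degenerates to an atom), with the matching lower bounds imported from Theorem \ref{relationships} in parts (1)--(2) and an explicit failure of doubling in part (3). The only cosmetic divergence is in part (3)(i), where you deduce $\r = \infty$ from $\overline{\dim}_{\textup{loc}}(0,\mu)=\infty$ while the paper exhibits $\mu(B(0,R))/\mu(B(0,R/2))\to\infty$ directly; these are equivalent via Proposition \ref{doubling2}.
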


The above theorem can be summarised by the following table, for suitable values of $\omega$ and $\lambda$:

\begin{table}[h]
\centering
\label{sequencetable}
\begin{tabular}{c|cc}
$p(n) \setminus x_n$ & $n^{-\lambda}$             & $\lambda^n$                        \\ \hline
$n^{-\omega}$       & $\max \left\{1,\frac{\omega - 1}{\lambda}\right\}$ & $\infty$\\
$\omega^n$          & $\infty$                         & $\frac{\log \omega}{\log \lambda}$
\end{tabular}
\end{table}

\section{Proofs} \label{proof}

We prove Theorem \ref{relationships} in Section \ref{spectrumproof} followed by Theorem \ref{weaktangents} on weak tangents in Section \ref{weaktangentsproof}. Section \ref{self-similar} will concern self-similar measures and will include a proof of Theorem \ref{selfsimilar}. Self-affine measures and the proof of Theorem \ref{carpet} will be dealt with in Section \ref{self-affine} along with some additional notation needed to study Bedford-McMullen sponges. Finally, Theorem \ref{sequences} will be proved in Section \ref{sequenceproof}. Any notation introduced in a subsection should only be used in that proof, but any notation used in the first two sections is assumed throughout.

\subsection{Proof of equivalence of doubling and finite upper regularity dimension} \label{doublingproof}

As we mentioned in the introduction, a measure is doubling if and only if it has finite upper regularity dimension. Here we state a quantifiable version of this fundamental fact and include our own short proof for completeness.  Given $\theta \in (0,1)$ let
\[
C(\theta) = \sup \left\{ \frac{\mu(B(x,R))}{\mu(B(x,\theta R))} \ : \ x \in \text{supp}(\mu), R>0 \right\} \geq 1
\]
and note that $C(1/2)$ is the `doubling constant' from the definition of doubling.  Also recall that if $C(\theta) < \infty$ for some $\theta \in (0,1)$, then  $C(\theta) < \infty$ for all $\theta \in (0,1)$.

\begin{prop}\label{doubling2}
A locally finite Borel measure is doubling if and only if $\r<\infty$.  Moreover, we have
\[
\r \le \inf_{\theta \in (0,1)} \frac{ \log C(\theta)}{ - \log \theta}.
\]
\end{prop}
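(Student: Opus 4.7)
The plan is to prove both directions of the equivalence in a single argument that also yields the quantitative bound. The ``only if'' direction is immediate from the definition of $\r$: if $\r < \infty$, choose any $s > \r$ and a corresponding constant $C > 0$ so that $\mu(B(x,R))/\mu(B(x,r)) \le C(R/r)^s$ for all $0 < r < R$; specialising to $r = R/2$ yields $C(1/2) \le C \cdot 2^s < \infty$, so $\mu$ is doubling.

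For the ``if'' direction together with the quantitative bound, I would fix $\theta \in (0,1)$ with $C(\theta) < \infty$ and show that $s_\theta := \log C(\theta)/(-\log \theta)$ is an admissible exponent in the definition of $\r$. The main tool is an iteration of the defining inequality for $C(\theta)$: for any $x \in \mathrm{supp}(\mu)$, $R > 0$ and integer $n \ge 0$, applying $\mu(B(x,R')) \le C(\theta)\,\mu(B(x,\theta R'))$ successively with $R' = R, \theta R, \ldots, \theta^{n-1} R$ gives
\[
\mu(B(x,R)) \le C(\theta)^n \,\mu(B(x, \theta^n R)).
\]

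Given any $0 < r < R$, I would then select $n$ as the smallest non-negative integer with $\theta^n R \le r$. This choice guarantees $B(x,\theta^n R) \subseteq B(x,r)$, hence $\mu(B(x,\theta^n R)) \le \mu(B(x,r))$, while the minimality of $n$ yields $n \le 1 + \log(R/r)/(-\log \theta)$. Plugging both into the iterated estimate produces
\[
\frac{\mu(B(x,R))}{\mu(B(x,r))} \,\le\, C(\theta)^n \,\le\, C(\theta) \left(\frac{R}{r}\right)^{\log C(\theta)/(-\log\theta)},
\]
which shows $\r \le s_\theta$. Taking the infimum over $\theta \in (0,1)$ delivers the claimed bound, and in particular forces $\r < \infty$ whenever $\mu$ is doubling.

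The argument is essentially routine, so I do not expect a serious obstacle; the only mild subtlety is that the auxiliary remark that ``$C(\theta) < \infty$ for one $\theta$ implies $C(\theta') < \infty$ for every $\theta' \in (0,1)$'' need not be invoked as a separate fact, since it is itself a consequence of the two-sided bound established by the very iteration above.
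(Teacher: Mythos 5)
Your proposal is correct and follows essentially the same route as the paper: both iterate the doubling inequality along the radii $R, \theta R, \ldots, \theta^n R$ with $n$ chosen so that $\theta^n R \le r < \theta^{n-1}R$, bound the final ratio by $1$ via the inclusion $B(x,\theta^n R)\subseteq B(x,r)$, and convert $C(\theta)^n$ into $C(\theta)(R/r)^{\log C(\theta)/(-\log\theta)}$. Your closing remark is also sound: once $\r\le \log C(1/2)/\log 2<\infty$ is established, the finiteness of $C(\theta)$ for every $\theta\in(0,1)$ follows from the definition of $\r$, so the auxiliary fact need not be assumed separately.
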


\begin{proof}
If $\r <\infty$ then it follows immediately from the definition that $\mu$ is doubling.  For the reverse implication, assume that $\mu$ is doubling and let $\theta \in (0,1)$.  Therefore, for all $x \in \text{supp}(\mu)$ and $R>0$
\[
\frac{\mu(B(x,R))}{\mu(B(x,\theta R))} \le C(\theta).
\]
Fix  $0< r < R$ and  define $k$ to be the unique integer such that $R\theta^k > r \ge R\theta^{k+1}$. Then by telescoping we obtain
\[
\frac{\mu(B(x,R))}{\mu(B(x,r))} = \frac{\mu(B(x,R))}{\mu(B(x,\theta R))} \times \frac{\mu(B(x,\theta R))}{\mu(B(x,\theta^2 R))} \times\cdots \times \frac{\mu(B(x,\theta^k R))}{\mu(B(x,\theta^{k+1}R))} \times \frac{\mu(B(x,\theta^{k+1}R))}{\mu(B(x,r))}.
\]
Thus 
\[
\frac{\mu(B(x,R))}{\mu(B(x,r))} \le C(\theta)^{k+1}  \frac{\mu(B(x,\theta^{k+1}R))}{\mu(B(x,r))} \le C(\theta)^{\log(r/R) / \log \theta+1} = C(\theta) \left(\frac{R}{r} \right)^{-\log C(\theta) / \log \theta}
\]
and hence $\r \leq -\log C(\theta) / \log \theta < \infty$, as required.
\end{proof}

\subsection{Proof of Theorem \ref{relationships}: general relationships}\label{spectrumproof}

Let $\mu$ be a Borel probability measure supported on a compact set $X \subseteq \mathbb{R}^d$. Let $0<s< T(\mu)$,  $\r < t < \infty$ and $q<0$. By definition there exists a  constant  $C \geq 1$ such that for all $x\in X$ and for all $0<r< 1$
\[
\frac{\mu(B(x,1))}{\mu(B(x,r))} \le C \left(\frac{1}{r} \right)^t.
\]
In particular, this guarantees 
\[
\mu(B(x,r))^q \le \frac{1}{C^q}\mu(B(x,1))^q  r^{qt}
\]
and, moreover, 
\[
M_r^q(\mu) \leq c r^{-d}r^{qt}
\]
where $c>0$ is a constant independent of  $r$ and where the $r^{-d}$ term comes from an upper bound on $r$-packings of $X \subseteq \mathbb{R}^d$.  Therefore 
\[
sq > \underline{\tau}(q)   \ge  qt-d
\]
and so $s<t-d/q$ for any $q<0$. By letting $q \rightarrow - \infty$ this yields $s \le t$, which is sufficient to prove that $T(\mu) \leq \r$.

 All that remains is to prove that $T(\mu) \geq \l$ for all $x \in X$.  As such, let $x \in X$ and $u>T(\mu)$ which implies that $\underline{\tau}(q)  \geq  uq$ for some $q<0$ which we fix.  Therefore, given $\epsilon>0$, there exists a constant $C_\epsilon>0$ such that
\[
M_r^q(\mu) \leq C_\epsilon r^{uq-\epsilon}
\]
for all $r \in (0,1)$.  Since $\{ B(x,r)\}$ is an $r$-packing of $X$, it follows that
\[
\mu(B(x,r))^q \le M_r^q(\mu) \leq C_\epsilon r^{uq-\epsilon}
\]
and therefore
\[
\mu(B(x,r)) \ge  C_\epsilon^{1/q} r^{u-\epsilon/q}
\]
for all $r\in (0,1)$, which proves that $\l \le u-\epsilon/q$ and since $\epsilon>0$ was arbitrary, this completes the proof.

\subsection{Weak tangent measures} \label{weaktangentsproof}

Let $\mu$ be a locally finite Borel measure on $\mathbb{R}^d$, $\left\{T_k\right\}_{k\in \mathbb{N}}$ a sequence of similarities on $\mathbb{R}^d$ with associated contraction ratios $\{c_k\}_{k\in \mathbb{N}}$, $\left\{p_k\right\}_{k\in \mathbb{N}}$ a sequence of positive renormalising numbers, and $\hat{\mu}$ be a corresponding weak tangent measure of $\mu$, that is a Borel measure on $\mathbb{R}^d$ such that 
\[
p_k \mu \circ T^{-1}_k \rightharpoonup \hat{\mu}
\]
where $\rightharpoonup$ means weak convergence. The Portmanteau Theorem (see \cite[Theorem 1.24]{mattila}) says that this is equivalent to 
\[
\lim_{k\rightarrow \infty} p_k \mu \circ T^{-1}_k (A) =\hat{\mu}(A)
\]
for all $\hat{\mu}$-continuity sets $A$.  Recall that  $A\subseteq \mathbb{R}^d$ is a $\hat{\mu}$-continuity set when $\hat{\mu}(\partial A) = 0$ with $\partial A$ being the boundary of $A$. It is a simple exercise to show that for a fixed $x \in \mathbb{R}^d$, all but at most countably many balls $B(x,r)$ are $\hat{\mu}$-continuity sets of $\mathbb{R}^d$. We now provide a technical lemma which reduces our calculation of the upper regularity dimension of $\hat \mu$ to the study of balls which are $\hat{\mu}$-continuity sets.

\begin{lma}
Let $\nu$ be a locally finite Borel measure  on $ \mathbb{R}^d$. Suppose there exist constants $C$ and $s$ such that for all $x\in \textup{supp}(\nu)$ and $0<r<R$ such that $B(x,R)$ and $B(x,r)$ are $\nu$-continuity sets, we have
\[
\frac{\nu(B(x,R))}{\nu(B(x,r))} \leq C\left(\frac{R}{r}\right)^{s}.
\]
Then $\overline{\dim}_{\textup{reg}}  \nu \le s.$
\end{lma}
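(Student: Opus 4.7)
The plan is to approximate arbitrary balls from the outside (for $R$) and from the inside (for $r$) by balls that are $\nu$-continuity sets, and then pass to the limit in the hypothesis. First I would recall the standard fact that for any fixed $x \in \mathbb{R}^d$ there are at most countably many $t > 0$ with $\nu(\partial B(x,t)) > 0$: the boundary spheres $\partial B(x,t)$ are pairwise disjoint, and on any bounded range of $t$ their union sits inside a set of finite $\nu$-measure by local finiteness, so only countably many can carry positive mass. Consequently the set of radii $t > 0$ for which $B(x,t)$ is a $\nu$-continuity set is dense in $(0,\infty)$.

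Next I would fix $x \in \textup{supp}(\nu)$ and $0 < r < R$, and use density to choose strictly monotone sequences $r_n \nearrow r$ and $R_n \searrow R$ with $r_n < R_n$, such that every $B(x,r_n)$ and every $B(x,R_n)$ is a $\nu$-continuity set. Since $x \in \textup{supp}(\nu)$ we have $\nu(B(x,r_n)) > 0$, and by local finiteness all the ball masses are finite, so the ratios below are well defined. Applying the hypothesis on these radii gives
\[
\frac{\nu(B(x,R_n))}{\nu(B(x,r_n))} \;\le\; C\left(\frac{R_n}{r_n}\right)^{s}.
\]

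The remaining step is monotonicity of $\nu$ on nested open balls: because $r_n < r < R < R_n$, we have $B(x,r_n) \subseteq B(x,r)$ and $B(x,R) \subseteq B(x,R_n)$, so $\nu(B(x,r_n)) \le \nu(B(x,r))$ and $\nu(B(x,R)) \le \nu(B(x,R_n))$. Combining these with the displayed inequality yields
\[
\frac{\nu(B(x,R))}{\nu(B(x,r))} \;\le\; \frac{\nu(B(x,R_n))}{\nu(B(x,r_n))} \;\le\; C\left(\frac{R_n}{r_n}\right)^{s},
\]
and letting $n \to \infty$ produces the clean bound $\nu(B(x,R))/\nu(B(x,r)) \le C(R/r)^s$. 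Since $x$ and $0 < r < R$ were arbitrary, the definition of upper regularity dimension gives $\overline{\dim}_{\textup{reg}}\nu \le s$.

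The main (and only mild) obstacle is setting up the density argument for continuity radii cleanly; once these are in hand on both sides, the limit passage is routine monotonicity. Division by zero is never an issue because membership in the support ensures every open ball around $x$ has positive $\nu$-mass.
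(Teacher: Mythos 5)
Your proof is correct and follows essentially the same approach as the paper: both arguments exploit the fact that for fixed $x$ only countably many radii fail to give $\nu$-continuity balls, then replace $r$ by a slightly smaller continuity radius and $R$ by a slightly larger one and use monotonicity of $\nu$ on nested balls. The only cosmetic difference is that you pass to the limit along sequences $r_n \nearrow r$, $R_n \searrow R$ and recover the constant $C$ exactly, whereas the paper fixes perturbation factors in $[1,2]$ and settles for the constant $4^sC$, which is equally sufficient for concluding $\overline{\dim}_{\textup{reg}}\nu \le s$.
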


\begin{proof}
Assume 
\[
\frac{\nu(B(x,R))}{\nu(B(x,r))} \leq C\left(\frac{R}{r}\right)^{s}
\]
holds for all $\nu$-continuity balls. Fix $x  \in \textup{supp}(\nu)$ and let $0<r<R$ be arbitrary. Since there are at most countably many problematic radii, there must exist constants $\theta_1(x,r),\theta_2(x,R)\in[1,2]$ such that $B(x,\theta_1(x,r)^{-1}r)$ and $B(x,\theta_2(x,R) R)$ are $\nu$-continuity balls. Thus
\[
\frac{\nu(B(x,R))}{\nu(B(x,r))} \leq \frac{\nu(B(x,\theta_2(x,R)R))}{\nu(B(x,\theta_1(x,r)^{-1}r))} \le   C\left(\frac{\theta_2(x,R)R}{\theta_1(x,r)^{-1}r}\right)^{s} \le 4^s C \left(\frac{R}{r}\right)^{s}
\]
and it follows that $\overline{\dim}_{\textup{reg}}  \nu \le s.$
\end{proof}

We now return to proving Theorem \ref{weaktangents}. Let  $x \in \textup{supp}( \hat \mu)$ and $\rho>0$ be such that $B(x,\rho)$ is a $\hat \mu$-continuity set.   Therefore 
\[
\lim_{k\rightarrow \infty} p_k \mu \circ T^{-1}_k (B(x,\rho)) =\hat{\mu}(B(x,\rho)).
\]
Therefore, for sufficiently large $k$, 
\begin{equation} \label{estimatee}
\frac{1}{2}p_k \mu \circ T^{-1}_k (B(x,\rho))  \le \hat{\mu}(B(x,\rho))\le 2p_k \mu \circ T^{-1}_k (B(x,\rho)).
\end{equation}
  Let $\varepsilon > 0 $ and  $0<r<R$ be such that both $B(x,R)$ and $B(x,r)$ are $\hat \mu$-continuity sets  and  choose $k $ large enough so that  (\ref{estimatee}) holds for $\rho=r$ and $\rho=R$.  In particular,  
\[
\frac{\hat{\mu}(B(x,R))}{\hat{\mu}(B(x,r))} \le 4 \frac{p_k\mu \circ T^{-1}_k (B(x,R))}{p_k \mu \circ T^{-1}_k (B(x,r))}=4\frac{\mu (T^{-1}_k (B(x,R)))}{\mu( T^{-1}_k (B(x,r)))}.
\]
Note that $T_k$ is a similarity of contraction ratio $c_k >0$ and so $T_k^{-1}(B(x,R))= B(T_k^{-1}(x),c_k^{-1}R)$. Thus
\begin{align*}
\frac{\hat{\mu}(B(x,R))}{\hat{\mu}(B(x,r))} &\le 4\frac{\mu (B(T_k^{-1}(x),c_k^{-1}R))}{\mu (B(T_k^{-1}(x),c_k^{-1}r))}.
\end{align*}
Here we wish to apply the definition of the upper regularity dimension of $\mu$, but we cannot do this directly since  $T_k^{-1}(x)$ does not have to be in $\text{supp}(\mu)$. However, we can assume $k$ is large enough (depending on $r$) so that there exists $x'$ in the support of $\mu \circ T_k^{-1}$ which is at distance at most $r/2$ from $x$. Therefore
\[
B(T_k^{-1}(x'),c_k^{-1}r/2)\subset B(T_k^{-1}(x),c_k^{-1}r) \qquad \text{ and } \qquad   B(T_k^{-1}(x),c_k^{-1}R) \subset B(T_k^{-1}(x'), 2c_k^{-1}R)
\]
and $T_k^{-1}(x')$ is in the support of $\mu$.  Therefore
\begin{align*}
\frac{\hat{\mu}(B(x,R))}{\hat{\mu}(B(x,r))} &\le 4\frac{\mu (B(T_k^{-1}(x),c_k^{-1}R))}{\mu (B(T_k^{-1}(x),c_k^{-1}r))} \le 4\frac{\mu (B(T_k^{-1}(x'),2c_k^{-1}R))}{\mu (B(T_k^{-1}(x'),c_k^{-1}r/2))} \\
&\le 4C \left(\frac{4c_k^{-1}R}{c_k^{-1}r}\right)^{\r+ \varepsilon} =  4^{1+\r+ \varepsilon}C \left(\frac{R}{r}\right)^{\r+ \varepsilon}
\end{align*}
where $C= C(\varepsilon)>0$ is a uniform constant independent of $x$, $r$ and $R$ which comes from the definition of the upper regularity dimension of $\mu$. Letting $\epsilon \to 0$ proves  $\overline{\dim}_{\textup{reg}} \hat{\mu} \le \r$ as desired.

\subsection{Proof of Theorem \ref{selfsimilar}: self-similar measures} \label{self-similar}

There is a natural correspondence between the geometric fractal $F$ and the symbolic space $\mathcal{I}^{\mathbb{N}}$ (the set of all infinite words over $\mathcal{I}$) via the coding map $\pi \colon \mathcal{I}^{\mathbb{N}} \rightarrow \mathbb{R}^d$ defined by 
\[
\{\pi(i_0,i_1,\ldots) \} =  \bigcap_{n=0}^\infty S_{i_0,\ldots, i_n}(F)
\]
where $S_{i_0,\ldots, i_n} = S_{i_0} \circ \cdots \circ S_{ i_n}$.  For $ i_0,i_1,\ldots, i_{n-1} \in \mathcal{I}$ we define a cylinder $[i_0,i_1,\ldots, i_{n-1}] \subseteq \mathcal{I}^\mathbb{N}$, to be the set of all words in $\mathcal{I}^{\mathbb{N}}$ whose first $n$ letters are $i_0,i_1,\ldots, i_{n-1}$. The collection of all level $n$ cylinders corresponds to the  $n$th level  pre-fractal of the attractor.  It is well-known that $F= \pi(\mathcal{I}^\mathbb{N})$ and the SSC also guarantees that $\pi$ is a bijection so we may interchange the symbolic and geometric spaces.  Also note that  $c_{i_0}\cdots c_{i_{n}}$ is the contraction ratio of the similarity $S_{i_0,\ldots, i_n} $,  $\pi([i_0,i_1,\ldots, i_{n}] ) = S_{i_0,\ldots, i_n} (F)$ and the $\mu$ measure of $S_{i_0,\ldots, i_n} (F)$  is $p_{i_0}\cdots p_{i_{n}}$.  By rescaling if necessary, we may assume without loss of generality that diam$(F)=1$.

We define $\delta$ to be the minimal distance between distinct  sets $S_i(F)$ and $S_j(F)$, that is 
\[
\delta= \min_{i\neq j} \inf_{\substack{x \in S_i(F) \\ y \in S_j(F)}} \lvert x-y \rvert.
\]
The SSC guarantees that $\delta > 0$. Thus the minimal distance between $S_{i_0,\ldots,i_{n-1},i}(F)$ and $S_{i_0,\ldots,i_{n-1},j}(F)$ is at least $c_{i_0}\cdots c_{i_{n-1}} \delta$ for any   $ i_0,i_1,\ldots, i_{n-1} \in \mathcal{I}$ and $i\neq j$.

For $x\in F$ with $\pi(i_0, i_1, \dots)=x$ for a unique $(i_0,i_1,\dots) \in \mathcal{I}^\mathbb{N}$ and small $r>0$, we define the integer $n(x,r)$ to be the largest integer such that $r \le c_{i_0}c_{i_1} \cdots c_{i_{n(x,r)}}$ and so
\[
c_{i_0}c_{i_1} \cdots c_{i_{n(x,r)}+1} < r \le c_{i_0}c_{i_1} \cdots c_{i_{n(x,r)}}.
\]
We also let $m(x,r)$ be the smallest non-negative  integer such that 
$$
\pi([i_0,\ldots, i_{m(x,r)}])\subset B(x,r/2),
$$
and so, in particular,  $p_{i_0}\cdots p_{i_{m(x,r)}} \le \mu (B(x,r))$.  Note that for any $x \in F$, and small $ r>0$,  $c_{i_0} \cdots c_{i_{m(x,r)}} \leq r$. Also  $\pi([i_0,\ldots, i_{m(x,r)-1}]) \not\subset B(x,r/2)$ and so $c_{i_0} \cdots c_{i_{m(x,r)-1}} \ge r/2$. Using the SSC, we have that for all $x=\pi(i_0,\ldots)$ and  $r>0$, we have $\mu(B(x,\frac{\delta r}{2}))\le p_{i_0}\cdots p_{i_{n(x,r)}}$ where $\delta$ is the constant determined by the SSC. This is true since $B(x, \frac{\delta r}{2}) \cap F  \subseteq \pi([i_0,\ldots, i_{n(x,r)}])$.

Let $x = \pi(\omega)$ where $\omega \in \mathcal{I}^{\mathbb{N}}$ is an infinite string of the symbol  $i$ which maximises $\log p_i/\log c_i$. It follows that $n(x,r) > \log r / \log {c_i} - 1$ and therefore
\[
\mu(B(x,\delta r/2)) \le p_{i}^{n(x,r)} \leq p_i^{-1} r^{\log p_i/\log c_i}
\]
and it follows that $\overline{\dim}_{\text{loc}}(x,\mu) \geq \log p_i/\log c_i = \colon t$.  Moreover, Theorem \ref{relationships} yields  $ \r  \geq \sup_{x\in F} \overline{\dim}_{\text{loc}}(x,\mu) \ge  t$.  We will now demonstrate the reverse inequality.

As $F$ satisfies the SSC and $\mu$ is a Bernoulli measure, it is doubling (see \cite{olsenformalism}, for example). Thus there exists a constant $C \geq 1$ depending only on $\delta/2 < 1$ such that $\frac{\mu(B(x, R))}{\mu(B(x,\delta R/2))}\le C$ for any $x\in F$ and for any $R>0$.  Let $x \in F$ and $0<r<R$  and assume without meaningful loss of generality that $n(x,R) < m(x,r)$. If this were not true, then $R/r$ is bounded above by a uniform constant -- a situation we can safely ignore.  Hence
\begin{align*}
\frac{\mu(B(x,R))}{\mu (B(x,r))}& \le \ C \frac{\mu(B(x,\delta R/2))}{\mu (B(x,r))}  \\
& \le \ C \frac{p_{i_0}\cdots p_{i_{n(x,R)}}}{p_{i_0}\cdots p_{i_{m(x,r)}}} \\
& =\  C  \frac{1}{p_{i_{n(x,R)+1}}} \cdots \frac{1}{p_{i_{m(x,r)}}} \\
& = \ C  \left(\frac{1}{c_{i_{n(x,R)+1}}}\right)^{\log p_{i_{n(x,R)+1}}/\log c_{i_{n(x,R)+1}}} \cdots \left(\frac{1}{c_{i_{m(x,r)}}}\right)^{\log p_{i_{m(x,r)}}/\log c_{i_{m(x,r)}}} \\
& \le\  C  \left( \frac{1}{c_{i_{n(x,R)+1}}}\right)^t \cdots \left( \frac{1}{c_{i_{m(x,r)}}}\right)^t \\
& = \ C  \left(\frac{1}{c_{i_{n(x,R)+1}} c_{i_{m(x,r)}}}\right)^t \left( \frac{c_{i_0}c_{i_1} \cdots c_{i_{n(x,R)+1}}}{c_{i_0}c_{i_1} \cdots c_{i_{m(x,r)-1}}}\right)^t \\
& \le \  C   \left(\frac{2}{c_{\min}^2}\right)^t \left( \frac{R}{r}\right)^t
\end{align*}
where $c_{\min} = \min_{i\in \mathcal{I}}c_i$, is just a constant.  The desired upper bound, and Theorem \ref{selfsimilar}, follows.

\subsection{Proof of Theorem \ref{carpet}: self-affine measures} \label{self-affine}

Similar to the previous section, we use the natural correspondence between the self-affine set $F$ and the symbolic space $\mathcal{I}^{\mathbb{N}}$ (the set of all infinite words over $\mathcal{I}$) via the coding map $\pi \colon \mathcal{I}^{\mathbb{N}} \rightarrow \mathbb{R}^d$ defined by 
\[
\{\pi(\textbf{i}_1, \textbf{i}_2, \ldots)\}=  \bigcap_{n=1}^\infty S_{\textbf{i}_1,\ldots, \textbf{i}_n}(F).
\]
where $ S_{\textbf{i}_1,\ldots, \textbf{i}_n} =  S_{\textbf{i}_1} \circ \cdots \circ S_{\textbf{i}_n}$.  Recall that elements of $\mathcal{I}$ have $d$ coordinates so we write $(\mathbf{i}_1,\ldots) = ((i_{1,1},\ldots, i_{1,d}),\ldots) \in \mathcal{I}^{\mathbb{N}} $.

Since the cylinders scale by different amounts in different directions, they do not directly approximate a ball in measure.  For this reason, we introduce \emph{approximate cubes}. For $r\in (0,1]$, choose the unique integers $k_1(r),\ldots,k_d(r)$, greater than or equal to 0, satisfying
\[
\frac{1}{n_l^{k_l(r)+1}}< r \leq \frac{1}{n_l^{k_l(r)}}
\]
for $l=1,\ldots,d$. In particular, 
\[
 \frac{-\log r}{\log n_l}-1 < k_l(r) \leq \frac{-\log r}{\log n_l}.
\]
Then the approximate cube $Q(\omega, r)$ of (approximate) side length $r$ determined by $\omega =\left( \textbf{i}_1, \textbf{i}_2 , \ldots \right) =\left( (i_{1,1}, \dots, i_{1,d}), (i_{2,1}, \dots, i_{2,d}) , \ldots \right)    \in \mathcal{I}^{\mathbb{N}}$ is defined by
\[
Q(\omega, r)=\left\{ \omega'=\left( \textbf{j}_1, \textbf{j}_2 , \ldots \right)\in \mathcal{I}^{\mathbb{N}} : \forall \, \,  l=1, \ldots, d \text{ and } \forall\, \, t= 1, \ldots, k_l(r) \text{ we have } j_{t,l}=i_{t,l} \right\}.
\]
The geometric analogue is $\pi\left(Q(\omega, r)\right)$, which is contained in
\[ 
\prod_{l=1}^d \left[\frac{i_{1,l}}{n_l}+\cdots+\frac{i_{k_l(r),l}}{n_l^{k_l(r)}} \, , \, \frac{i_{1,l}}{n_l}+\cdots+\frac{i_{k_l(r),l}}{n_l^{k_l(r)}}+\frac{1}{n_l^{k_l(r)}} \right];
\]
a hypercuboid in $\mathbb{R}^d$ aligned with the coordinate axes with side lengths $n_l^{-k_l(r)}$, which are all comparable to $r$ since $ r \leq n_l^{-k_l(r)} < n_l r$.  Thus the measure of a ball can be closely approximated by the measure of an appropriate approximate cube.  This is made precise by the following   useful proposition due to Olsen \cite[Proposition 6.2.1]{sponges}.
\begin{prop}[\cite{sponges}] \label{ballscubes}
Let $\omega \in \mathcal{I}^{\mathbb{N}}$ and $k \in \mathbb{N}$.
\begin{enumerate} 
\item If the VSSC is satisfied, then $B\left( \pi(\omega), 2^{-1}n_1^{-k}\right)\cap F \subseteq \pi \left(Q\left( \omega, n_1^{-k} \right) \right).$ 
\item  $\pi \left(Q\left( \omega, 1/n_1^k \right) \right) \subseteq B\left( \pi(\omega), (n_1+\dots+n_d)n_1^{-k}\right).$ 
\end{enumerate}
\end{prop}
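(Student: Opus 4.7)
The proposition splits into two inclusions which I would treat separately. Part (2) is purely geometric and needs no separation hypothesis, while Part (1) is where the VSSC enters essentially.

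For Part (2) I would exploit the explicit hypercuboid description displayed immediately before the statement. If $\omega' \in Q(\omega, n_1^{-k})$ then by definition $\omega'$ and $\omega$ agree on the first $k_l(n_1^{-k})$ digits in direction $l$, so both $\pi(\omega)$ and $\pi(\omega')$ sit in that hypercuboid. Its side length in direction $l$ equals $n_l^{-k_l(n_1^{-k})}$, and the defining inequality $n_l^{-k_l(r)-1} < r$ yields $n_l^{-k_l(n_1^{-k})} < n_l \cdot n_1^{-k}$. Bounding the Euclidean distance between two points of an axis-aligned box by the sum of its side lengths (i.e.\ $\ell^2 \le \ell^1$) then gives $|\pi(\omega)-\pi(\omega')| \le \sum_{l=1}^d n_l / n_1^{k} = (n_1 + \cdots + n_d) n_1^{-k}$, as required.

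For Part (1) I would argue by contradiction. Suppose $y = \pi(\omega') \in F \cap B(\pi(\omega), n_1^{-k}/2)$ but $\omega' \notin Q(\omega, n_1^{-k})$, so some digit disagreement $j_{t,l} \ne i_{t,l}$ occurs within the window $t \le k_l(n_1^{-k})$. I would let $l^*$ be the smallest coordinate index for which this happens and $t^*$ the smallest row within the window where $j_{t^*, l^*} \ne i_{t^*, l^*}$. Since $n_1 < \cdots < n_d$ gives $k_1(r) \ge \cdots \ge k_d(r)$, we have $t^* \le k_{l^*}(n_1^{-k}) \le k_{l}(n_1^{-k})$ for every $l < l^*$, so minimality of $l^*$ forces $i_{t^*, l} = j_{t^*, l}$ for $l < l^*$. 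The VSSC now applies in row $t^*$ and yields $|i_{t^*, l^*} - j_{t^*, l^*}| \ge 2$. By minimality of $t^*$ the earlier digits in column $l^*$ also agree, so the $l^*$-th coordinate difference of $\pi(\omega)$ and $\pi(\omega')$ telescopes to $|i_{t^*, l^*} - j_{t^*, l^*}|/n_{l^*}^{t^*}$ plus a tail bounded by the geometric sum $(n_{l^*}-1)\sum_{t > t^*} n_{l^*}^{-t} = n_{l^*}^{-t^*}$, giving a net lower bound $1/n_{l^*}^{t^*}$. Since $t^* \le k_{l^*}(n_1^{-k})$ and $n_{l^*}^{-k_{l^*}(n_1^{-k})} \ge n_1^{-k}$, this is $\ge n_1^{-k}$, contradicting $|\pi(\omega') - \pi(\omega)| < n_1^{-k}/2$.

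I expect the main obstacle to be the quantifier bookkeeping in Part (1): one must choose $l^*$ as the first coordinate where disagreement occurs \emph{within the cube window}, not merely the first coordinate of any disagreement, and then combine this with the monotonicity of $k_l(r)$ in $l$ so that row $t^*$ satisfies the hypothesis of the VSSC. Getting the order of these quantifiers right is precisely what makes the VSSC, rather than only the SSC, the correct separation assumption here.
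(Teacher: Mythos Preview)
The paper does not prove this proposition; it is quoted from Olsen \cite[Proposition 6.2.1]{sponges} and used as a black box. There is therefore no in-paper argument to compare against.

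That said, your argument is correct in both parts. For Part~(2) the bound $n_l^{-k_l(r)} < n_l r$ together with $\lVert\cdot\rVert_2\le\lVert\cdot\rVert_1$ gives strict inequality, so the image really lands in the open ball. For Part~(1) the crucial step is exactly the one you flag: choosing $l^*$ minimal among coordinates with a disagreement \emph{inside the window}, and then using $k_1(r)\ge\cdots\ge k_d(r)$ to guarantee that the row $t^*$ also lies in the (larger) window for every earlier coordinate $l<l^*$, so that minimality of $l^*$ forces agreement on the first $l^*-1$ entries of $\mathbf{i}_{t^*}$ and $\mathbf{j}_{t^*}$. This is precisely what the VSSC needs; the SSC alone would only give $|i_{t^*,l^*}-j_{t^*,l^*}|\ge 1$, and then the geometric tail $\sum_{t>t^*}(n_{l^*}-1)n_{l^*}^{-t}=n_{l^*}^{-t^*}$ could cancel the leading term entirely. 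Your lower bound $n_{l^*}^{-t^*}\ge n_{l^*}^{-k_{l^*}(n_1^{-k})}\ge n_1^{-k}$ then contradicts membership in the open ball of radius $n_1^{-k}/2$, as required.
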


This proposition means, since we assume the VSSC, that a ball of a particular radius contains, and is contained in, an approximate cube of a comparable radius.  Therefore we may replace balls with approximate cubes in the definition of upper regularity dimension, which makes the calculations much easier.    For  a more in depth explanation of this simplification, see \cite{fraser-howroyd}.

Recalling the conditional probabilities $p(i_{l}\vert i_{1},\ldots,i_{l-1})$, defined in Section \ref{self-affineresults}, which give the probability of having $i_{l}$ as the $\text{l}^{\text{th}}$ coordinate given the previous ones, we can write down an explicit formula for the $\mu $ measure of an approximate cube for any $\omega \in \mathcal{I}^{\mathbb{N}}$ and $r \in (0,1)$:
\begin{equation} \label{approxcubemeasure}
 \mu(\pi(Q(\omega,r)))=\prod^d_{l=1} \prod_{j=0}^{k_l(r)-1}p_l(\sigma^j\omega)
\end{equation}
where $p_l(\omega)=p(i_{1,l}\vert i_{1,1},\ldots,i_{1,l-1})$ and $\sigma: \mathcal{I}^{\mathbb{N}} \to \mathcal{I}^{\mathbb{N}}$ is the left shift.  This formula follows immediately from the definition of $\mu$ and was first observed by Olsen   \cite[(6.2)]{sponges}.

For  $l=1,\ldots, d$, let $p_l^{\text{min}}=\min_{j\in \mathcal{I}} p_l(\mathbf{j})$  and let  $i_l^{\min} \in \mathcal{I}$ be an element achieving this minimum.  If such an element is not unique,  it does not matter which we choose.  Let $t=\sum_{l=1}^d\frac{-\log p_l^{\text{min}}}{\log n_l}$, be the target dimension.  Let $x= \pi(\omega) \in F$ for a (unique) $\omega \in \mathcal{I}^{\mathbb{N}}$ and $0<r<R\le 1$. By the above discussion, and Proposition \ref{ballscubes}, we see that
\[
\frac{\mu \left(\pi \left(Q\left(\omega,\frac{R}{n_1(n_1+ \cdots + n_d)}\right)\right)\right)} {\mu(\pi(Q(\omega,2n_1 r)))} \ \leq \  \frac{\mu(B(x,R))}{\mu(B(x,r))}  \ \leq \ \frac{\mu(\pi(Q(\omega,2n_1 R)))}{\mu \left(\pi \left(Q\left(\omega,\frac{r}{n_1(n_1+ \cdots + n_d)}\right)\right)\right)}
\]
and so to compute  $\r $ it suffices to consider 
\[
\frac{\mu(\pi(Q(\omega,R)))}{\mu(\pi(Q(\omega,r)))} 
\]
directly.  We begin with an upper bound using (\ref{approxcubemeasure}) and for convenience we assume without loss of generality that $r < R/ n_d$, which ensures $k_l(R) < k_l(r)$ for all $l$.  We have
\begin{align*}
\frac{\mu(\pi(Q(\omega,R)))}{\mu(\pi(Q(\omega,r)))} \ = \ \frac{\prod_{l=1}^d\left(\prod_{j=0}^{k_l(R)-1}p_l(\sigma^j \omega) \right)}{\prod_{l=1}^d\left(\prod_{j=0}^{k_l(r)-1}p_l(\sigma^j \omega) \right)}  & = \ \frac{1}{\prod_{l=1}^d\left(\prod_{j=k_l(R)}^{k_l(r)-1}p_l(\sigma^j \omega) \right)} \\
& \le\  \prod_{l=1}^d \frac{1}{\prod_{j=k_l(R)}^{k_l(r)-1}p_l^{\text{min}}} \\
& =\  \prod_{l=1}^d\left( \frac{1}{p_l^{\text{min}}}\right)^{k_l(r)-k_l(R)} \\
& \le\  \prod_{l=1}^d \left( \frac{1}{p_l^{\text{min}}}\right)^{-\log r/\log n_l + \log R/\log n_l + 1}  \\
& \le \ p^{-d} \left( \frac{R}{r} \right)^{t}
\end{align*}
where $p = \min_l p_l^{\text{min}}>0$ is a constant. It follows that $\r \le t$.

For the lower bound, we have to be a little more careful with the relationship between $R$ and $r$.  Again we assume that $r < R/ n_d$, which ensures $k_l(R) < k_l(r)$ for all $l$.  However, for technical reasons we also require $k_l(R) > k_{l+1}(r)$ for all $l = 1, \dots, d-1$.  For this it is sufficient to assume
\[
(n_lR)^{\frac{\log n_{l+1}}{\log n_l}} < r
\]
and fortunately we can choose $r$ satisfying both of these conditions simultaneously.  This is where we use the fact that the $n_l$ are strictly increasing.  Moreover, we can choose a sequence of pairs $(r,R)$ such that $R \to 0$, $R/r \to \infty$ and
\[
(n_lR)^{\frac{\log n_{l+1}}{\log n_l}} < r< R/ n_d.
\]
Let $R,r$ be a pair from this sequence and observe that
\begin{equation} \label{orderingk}
k_d(R)< k_d(r)<k_{d-1}(R)< k_{d-1}(r)< \cdots <k_2(R)<k_2(r)<k_1(R)<k_1(r).
\end{equation}
Let $\omega \in  \mathcal{I}^{\mathbb{N}}$ be chosen such that
\begin{align*} 
\omega&= (i_1,\ldots, i_{k_d(R)}, i_d^{\text{min}},\ldots, i_d^{\text{min}}, i_{k_d(r)+1},\ldots, i_{k_2(R)},\\
&i_2^{\text{min}},\ldots , i_2^{\text{min}}, i_{k_2(r)+1},\ldots, i_{k_1(R)}, i_1^{\text{min}}, \ldots, i_1^{\text{min}}, i_{k_1(r)+1},\ldots).
\end{align*}
where the coordinates not specified as $i_l^{\text{min}}$ (for some $l \in \{1, \dots, d\}$) are arbitrary.  In particular, we insist that the coordinates $i_{k_l(R)+1}, \dots, i_{k_l(r)}$ are all equal to $i_l^{\min}$.   Note that we use (\ref{orderingk}) here.  We have
\begin{align*}
\frac{\mu(\pi(Q(\omega,R)))}{\mu(\pi(Q(\omega,r)))}  \ = \ \frac{\prod_{l=1}^d\left(\prod_{j=0}^{k_l(R)-1}p_l(\sigma^j \omega) \right)}{\prod_{l=1}^d\left(\prod_{j=0}^{k_l(r)-1}p_l(\sigma^j \omega) \right)} & =\ \frac{1}{\prod_{l=1}^d\left(\prod_{j=k_l(R)}^{k_l(r)-1}p_l(\sigma^j \omega) \right)} \\
& = \ \prod_{l=1}^d\left( \frac{1}{p_l^{\text{min}}}\right)^{k_l(r)-k_l(R)}  \\
& \ge\  \prod_{l=1}^d \left( \frac{1}{p_l^{\text{min}}}\right)^{-\log r/\log n_l -1+ \log R/\log n_l }  \\
& \ge \ p^{d} \left( \frac{R}{r} \right)^{t}
\end{align*}
where $p = \min_l p_l^{\text{min}}>0$ is a constant as before.  Since we can choose a sequence of parameters $x = \pi(\omega)$, $r<R$ satisfying the above with $R/r \to \infty$, it follows that $\r \ge t$, completing the proof.

\subsection{Sequences and associated measures}\label{sequenceproof}

We start by explaining our method and then we specialise to the particular cases of  Theorem \ref{sequences} in subsequent subsections.  For convenience, we assume that $x_k \searrow 0$, $x_k-x_{k+1} \searrow 0$ (decreasing gaps) and that $p(k)$ ($k \in \mathbb{N}$) can be extended to a decreasing $L^1$ function $p$ on the whole of $[0,\infty)$.  These conditions are obviously satisfied for the  examples we consider.  Throughout this section we write $f(x)=O(g(x))$ to mean $f(x) \le Cg(x)$ for a constant $C$ independent of $x$.

For  $0<r<1$ and $x\in  \textup{supp}(\mu)$, let $\overline{k}(x,r), \underline{k}(x,r)$   be the unique  integers such that $x_{\overline{k}(x,r)+1}<x-r\le x_{\overline{k}(x,r)}$ and  $x_{\underline{k}(x,r)}\le x+r <x_{\underline{k}(x,r)-1}$ where we adopt the convention that $\overline{k}(x,r) = \infty$ when $x-r \le 0$.    Therefore 
$$
\mu(B(x,r))=\frac{1}{\sum_{n=1}^\infty p(n)} \sum_{i=\underline{k}(x,r)}^{\overline{k}(x,r)} p(i).
$$
Of course there is a possibility that $x_{\overline{k}(x,r)}=x$ when $x\neq 0$ and then $\mu(B(x,r))=\mu(\left\{x\right\})$. Thus, given $0<r<R<1$, we will consider three different cases: 
\begin{enumerate}
\item $x\in \textup{supp}(\mu) \setminus \left\{ 0\right\}$  such that  $x_{\overline{k}(x,r)}=x_{\overline{k}(x,R)}=x$,
\item $x\in \textup{supp}(\mu) \setminus \left\{ 0\right\}$ such that  $x_{\overline{k}(x,R)}<x_{\overline{k}(x,r)}=x$,
\item $x\in \textup{supp}(\mu)$ such that  $x_{\overline{k}(x,R)}\le x_{\overline{k}(x,r)}<x$ or $x=0$.
\end{enumerate}
Case 1 is trivial since
\[
\frac{\mu(B(x,R))}{\mu(B(x,r))}=\frac{\mu(\{x\})}{\mu(\{x\})}=1 \leq \left(\frac{R}{r}\right)^0
\]
and so we omit further discussion of it.   We now consider case 3, which is the most important.  Since $p(x)$ is decreasing, we have
$$p(n+1)\le \int_{n}^{n+1}p(x)dx \le p(n)$$
and therefore
\begin{eqnarray*}
\frac{1}{\sum_{n=1}^\infty p(n)} \int_{\underline{k}(x,r)}^{\overline{k}(x,r)+1} p(x)dx \le  \mu(B(x,r)) &=& \frac{1}{\sum_{n=1}^\infty p(n)}\sum_{i=\underline{k}(x,r)}^{\overline{k}(x,r)} p(i) \\ \\
&\le& \frac{1}{\sum_{n=1}^\infty p(n)} \int_{\underline{k}(x,r)-1}^{\overline{k}(x,r)} p(x)dx .
\end{eqnarray*}
Hence for any $0<r<R<1$ and any $x\in  \textup{supp}(\mu)$ in case 3 we have
\[
\frac{\mu(B(x,R))}{\mu(B(x,r))} \le \frac{\int_{\underline{k}(x,R)-1}^{\overline{k}(x,R)} p(x)dx}{\int_{\underline{k}(x,r)}^{\overline{k}(x,r)+1} p(x)dx} \le \frac{\int_{\underline{k}(x,R)-1}^{\overline{k}(x,R)} p(x)dx}{\int_{\underline{k}(x,r)}^{\overline{k}(x,r)} p(x)dx}.
\]
To simplify this expression for convenience, we assume that $p(x)$ does not decay faster than exponentially in the sense that there is an $\alpha>0$ such that $p(n)/p(n+1) \le \alpha$ for all $n\in \mathbb{N}$. (This is satisfied for all $p(n)$ we consider.) Then
\begin{align*}
\int_{\underline{k}(x,R)-1}^{\overline{k}(x,R)} p(x)dx &=\int_{\underline{k}(x,R)-1}^{\underline{k}(x,R)} p(x)dx + \int_{\underline{k}(x,R)}^{\overline{k}(x,R)} p(x)dx \\
&\le \alpha^2\int_{\underline{k}(x,R)}^{\underline{k}(x,R)+1} p(x)dx + \int_{\underline{k}(x,R)}^{\overline{k}(x,R)} p(x)dx \\
&\le (\alpha^2+1) \int_{\underline{k}(x,R)}^{\overline{k}(x,R)} p(x)dx 
\end{align*}
Thus in case 3 we have 
\[
\frac{\mu(B(x,R))}{\mu(B(x,r))} \le (\alpha^2+1) \frac{\int_{\underline{k}(x,R)}^{\overline{k}(x,R)} p(x)dx}{\int_{\underline{k}(x,r)}^{\overline{k}(x,r)} p(x)dx}.
\]
In case 2, we get 
\[
\frac{\mu(B(x,R))}{\mu(B(x,r))} \le (\alpha^2+1)  \frac{\int_{\underline{k}(x,R)}^{\overline{k}(x,R)} p(x)dx}{\mu(\{x\})}
\]
In what follows we will drop the constants $(\alpha^2+1) $ to simplify notation.  We can now apply these bounds to specific sequences and measures to get upper bounds for the upper regularity dimension.  The lower bounds will be provided by Theorem \ref{relationships}.

\subsubsection{Polynomial-polynomial}

Let $p(n)=n^{-\omega}$ and $x_n=n^{-\lambda}$ with $\lambda>0$ and $\omega>1$.  Let $s=\frac{\omega-1}{\lambda}$ be the target dimension and note that
\begin{align*}
(\overline{k}(x,r)+1)^{-\lambda}&<x-r\le \overline{k}(x,r)^{-\lambda} \\
\underline{k}(x,r)^{-\lambda}&\le x+r< (\underline{k}(x,r)-1)^{-\lambda} 
\end{align*}
with $\overline{k}(x,r) = \infty$ if $x-r\le 0$ and thus (for $x > r$)
\begin{align*}
      (x-r)^{-1/\lambda}-1 &< \overline{k}(x,r) \le (x-r)^{-1/\lambda} \\
      (x+r)^{-1/\lambda} &\le \underline{k}(x,r) < (x+r)^{-1/\lambda}+1.
\end{align*}
We first consider case 3. By our previous calculations, for any $0<r<R<1$ and for any $x\in \textup{supp}(\mu)$ satisfying the conditions required by case 3, we get (up to constants which we ignore)
\begin{eqnarray*}
\frac{\mu(B(x,R))}{\mu(B(x,r))} \le \frac{\int_{\underline{k}(x,R)}^{\overline{k}(x,R)} x^{-\omega}dx}{\int_{\underline{k}(x,r)}^{\overline{k}(x,r)} x^{-\omega}dx} &\le& \frac{\overline{k}(x,R)^{1-\omega}-\underline{k}(x,R)^{1-\omega}}{\overline{k}(x,r)^{1-\omega}-\underline{k}(x,r)^{1-\omega}} \\ 
& \le & \frac{(x+R)^{s}-\max\left\{ x-R,0\right\}^{s}}{(x+r)^{s}-\max\left\{x-r,0\right\}^s}.
\end{eqnarray*}

We are interested in the supremum of this upper bound taken over all $x \in [0,1]$.  It turns out that this can be controlled from above by a constant  multiple of the bound evaluated at $x=0$ or $x=R$.  We demonstrate this by repeated application of Taylor's theorem for   $(1+y)^s$ as a function of $y$ close to 0. In particular,   there exist constants $\varepsilon \in (0,1)$ and $C > 0$ (depending only on $s$) such that for any $y \in [-\varepsilon, \varepsilon]$
\[
1 + s y - C y^2 \le (1+y)^s \le 1 + s y + C y^2,
\]
that is $ (1+y)^s = 1 + s y +O( y^2)$.  We may assume $r<\varepsilon^2 R$ and we consider distinct cases (a), (b) and (c).

\noindent (a) Assume $x\in [0,r]$.  In this case the upper bound is decreasing so a bound obtained at $x = 0$ will be a bound for the whole region. When $x=0$ it follows immediately that
\[
\frac{\mu(B(x,R))}{\mu(B(x,r))} \le \left(\frac{R}{r}\right)^s.
\]
\noindent (b) Assume $r < x < R$. 
 If  $x > r/\varepsilon$, then 
\begin{align*}
\frac{\mu(B(x,R))}{\mu(B(x,r))} &\le \frac{(2R/x)^s}{(1+r/x)^s - (1-r/x)^s} \le \frac{(2R/x)^s}{1+sr/x-1+sr/x + O((r/x)^2)}\\ 
&=  (2R/x)^s O(x/r) \le O\left(\left(\frac{R}{r}\right)^{\max\{s,1\}}\right).    
\end{align*}
If $x < r/\varepsilon$, then 
\[
\frac{\mu(B(x,R))}{\mu(B(x,r))} \le \frac{(2R)^s}{(x+r)^s - (x-r)^s} = O\left(\left(\frac{R}{r}\right)^s\right).
\]
The first bound here is controlled from above by the behaviour at  $x=0$ if $s \geq 1$ and $x=R$ if $s <1$ (see below) whilst the second one is simply controlled by the behaviour at $x=0$.

\noindent (c) Assume  $x \in [R,1]$. If $x \leq R/\varepsilon$, then 
\[
\frac{\mu(B(x,R))}{\mu(B(x,r))} \le \frac{(x+R)^s - (x-R)^s}{(x+r)^s - (x-r)^s} = \frac{(1+R/x)^s - (1-R/x)^s}{(1+r/x)^s - (1-r/x)^s}
\]
and so we use Taylor's Theorem  to obtain
\[
\frac{\mu(B(x,R))}{\mu(B(x,r))} \le \frac{2^s}{1+sr/x + O((r/x)^2)- 1 + sr/x + O((r/x)^2)} \le O\left(\frac{x}{r}\right) = O\left(\frac{R}{r}\right).
\]
If $x > R/\varepsilon$, then  Taylor's theorem can be used on $(1+R/x)^s$ as well yielding
\[
\frac{\mu(B(x,R))}{\mu(B(x,r))} \le \frac{(1+R/x)^s - (1-R/x)^s}{(1+r/x)^s - (1-r/x)^s} \le \frac{1+sR/x - 1 + sR/x + O((R/x)^2)}{1-sr/x -1 + sr/x + O((r/x)^2)} =O\left(\frac{R}{r}\right)
\]
as desired. In particular, the bounds attained here are controlled from above by the  behaviour at $x= R$.   This completes the proof in the original case 3.

We now consider case 2, where we have
\begin{align*}
&\frac{\mu(B(x,R))}{\mu(B(x,r))} \le \frac{(x+R)^{s}-\max\left\{x-R,0\right\}^{s}}{x^{\omega/\lambda}}
\end{align*}
up to a constant which we ignore.  This upper bound is decreasing in $x$ and the case 2 assumption forces a lower bound on $x$ in terms of $r$.  Indeed, let $x=n^{-\lambda}$ and note that, since we are in case 2, we have
\[
n^{-\lambda}-  r > (n+1)^{-\lambda}.
\]
It follows that 
\[
r<  n^{-\lambda}- (n+1)^{-\lambda} =  \frac{-1}{\lambda}\int_{n+1}^n z^{-\lambda-1}dz = \frac{1}{\lambda} n^{-\lambda-1}
\]
and rearranging gives
\[
x= n^{-\lambda} > (\lambda r )^{\lambda/(\lambda +1)}.
\]
Therefore, we have
\[
\frac{\mu(B(x,R))}{\mu(B(x,r))} \le   \frac{((\lambda r )^{\lambda/(\lambda +1)}+R)^{s}-\max\left\{(\lambda r )^{\lambda/(\lambda +1)}-R,0\right\}^{s}}{(\lambda r )^{\omega/(\lambda +1)}}
\]
and we split into two further subcases according to which term dominates in the numerator.
\begin{enumerate}
\item[(i)] If $(\lambda r)^{\lambda/(\lambda+1)}<R$, then
\[
\frac{\mu(B(x,R))}{\mu(B(x,r))} \le \frac{(2R)^{s} }{ (\lambda r )^{\omega/(\lambda +1)}}  \le O \left( (R/r)^s \right)
\]
provided $s \geq \omega/(\lambda +1)$.  If this is not the case, then simple algebra yields $s < 1$.  This, combined with our assumption  $(\lambda r)^{\lambda/(\lambda+1)}<R$, gives
\[
\frac{\mu(B(x,R))}{\mu(B(x,r))} \le \frac{(2R)^{s} }{ (\lambda r )^{\omega/(\lambda +1)}}  \le O \left( R/r \right).
\]
\item[(ii)] If $(\lambda r)^{\lambda/(\lambda+1)} \geq R$, then 
\begin{eqnarray*}
\frac{\mu(B(x,R))}{\mu(B(x,r))} &\le& \frac{((\lambda r )^{\lambda/(\lambda +1)}+R)^{s}- ((\lambda r )^{\lambda/(\lambda +1)}-R)^{s}}{(\lambda r )^{\omega/(\lambda +1)}} 
\end{eqnarray*}
and applying Taylor series estimates in $R/(\lambda r)^{\lambda/(\lambda+1)} \to 0$ similar to above we obtain
\begin{eqnarray*}
\frac{\mu(B(x,R))}{\mu(B(x,r))} &\le& \frac{(\lambda r)^{s\lambda/(\lambda+1)}}{(\lambda r )^{\omega/(\lambda +1)}} \, O \left( R/(\lambda r)^{\lambda/(\lambda+1)} \right) \le O \left( R/r \right).
\end{eqnarray*}
\end{enumerate}

Combining the above estimates gives $\r\le\max \{s,1\}$.  Finally, note that $\overline{\dim}_{\text{loc}}(x,\mu)=0$ when $x\neq 0$, since such $x$ are atoms.  However, when $x=0$, the above estimates  immediately give $\overline{\dim}_{\text{loc}}(0,\mu)= s$ and it is well-known that $\text{supp}(\mu) = 1$, which, combined with Theorem \ref{relationships}, completes the proof of the `polynomial-polynomial' part of Theorem \ref{sequences}.

\subsubsection{Exponential-exponential}

Let $ x_n = \lambda^{n} $ with associated probabilities $p(n)=\omega^{n}$, where $\lambda, \omega \in (0,1)$ and let $s=\frac{\log \omega}{\log \lambda}$ be the target dimension. The situation is much simpler than in the `polynomial-polynomial' case due to the exponential convergence allowing rougher estimates.  In particular, for any $x \in [0,1]$ and $0<r<R$ with $(r/R)$ sufficiently small, we have
\begin{eqnarray*}
\frac{\mu(B(x,R))}{\mu(B(x,r))} \le \frac{\int_{\underline{k}(x,R)-1}^{\overline{k}(x,R)} \omega^x dx}{\int_{\underline{k}(x,r)}^{\overline{k}(x,r)+1} \omega^x dx} = \frac{\omega^{\underline{k}(x,R)-1} - \omega^{\overline{k}(x,R)} }{\omega^{\underline{k}(x,r)}-\omega^{\overline{k}(x,r)+1}  }  &\le&  \frac{\omega^{\underline{k}(x,R)-1}   }{\omega^{\underline{k}(x,r)}(1-\omega)  } \\ \\
&  \le& \frac{1}{\omega^2(1-\omega)} \frac{(x+R)^s}{(x+r)^s} \\ \\
&  \le& O\left(  (R/r)^s \right)
\end{eqnarray*}
which proves that $\r \leq s$.  Again,  it is clear that $\overline{\dim}_{\text{loc}}(x,\mu)=0$ when $x\neq 0$ but $\overline{\dim}_{\text{loc}}(0,\mu)= s$ and it is well-known that $\text{supp}(\mu) = 0$, which resolves the `exponential-exponential' part of  Theorem \ref{sequences}.

\subsubsection{Mixed rates}

We first consider the case where  $x_n=n^{-\lambda}$ with associated probability vector $p(n)=\omega^n$, where $\lambda>0$ and $\omega\in(0,1)$.  Choosing  $x=0$ and $r=R/2$ we get
\[
 \frac{\mu(B(0,R))}{\mu(B(0,R/2))}   \geq  \frac{\int_{\underline{k}(x,R)}^\infty \omega^x dx}{\int_{\underline{k}(x,R/2)-1}^\infty \omega^x dx} \geq   \omega^{R^{-1/\lambda}-(R/2)^{-1/\lambda}} \to \infty
\]
as $R \to 0$, which proves that $\mu$ is not doubling, as required.
\\ \\
We now consider the opposite case, where $x_n=\lambda^{n}$ with associated probability vector $p(n)=n^{-\omega}$, where $\lambda\in (0,1)$ and $\omega > 1$. Curiously, and in contrast to the previous case,  the measure is `very doubling' at 0. As such, to demonstrate that the measure is non-doubling, we choose $x=R$ and $r=R/2$.  Our previous estimates yield that for sufficiently small $R>0$ and up to a constant that we ignore
\begin{eqnarray*}
\frac{\mu(B(R,R))}{\mu(B(R,R/2))}   \geq  \frac{(-\log 2R)^{1-\omega}}{(-\log \frac{3R}{2})^{1-\omega} - (-\log\frac{R}{2} )^{1-\omega}}   \to \infty
\end{eqnarray*}
as $R \to 0$, proving that  $\mu$ is non-doubling.

\section*{Acknowledgements}

The authors thank Xiong Jin for originally suggesting that we investigate the upper regularity dimension and Han Yu for many fruitful conversations. We also thank Antti K\"aenm\"aki for providing useful references, Haipeng Chen for helpful comments, and an anonymous referee for their corrections. J.M.F. was financially supported by a \emph{Leverhulme Trust Research Fellowship} (RF-2016-500) and D.C.H. was financially supported by an EPSRC Doctoral Training Grant (EP/N509759/1).


\begin{thebibliography}{99}




\bibitem[B]{bedford}
T. Bedford.
{\em Crinkly curves, Markov partitions and box dimension in self-similar sets},
Ph.D. dissertation, University of Warwick, (1984).



\bibitem[DS]{das-simmons}
T. Das and D. Simmons. 
The Hausdorff and dynamical dimensions of self-affine sponges: a dimension gap result,
\emph{Invent. Math.  }, {\bf 210}, (2017),  85--134. 



\bibitem[F]{falconer}
K.~J. Falconer.
{\em Fractal Geometry: Mathematical Foundations and Applications},
2nd Ed., John Wiley, Hoboken, NJ, (2003).


\bibitem[Fr]{fraser}
J. M. Fraser.
Assouad type dimensions and homogeneity of fractals,
\emph{Trans. Amer. Math. Soc.}, {\bf366}, (2014), 6687--6733.

\bibitem[FH]{fraser-howroyd}
J. M. Fraser and D. C. Howroyd. 
Assouad type dimensions of self-affine sponges,
\emph{Ann. Acad. Sci. Fenn. Math.}, \textbf{42}, (2017), 149--174.


\bibitem[FJ]{fraser-jordan}
J. M. Fraser and T. Jordan. 
The Assouad dimension of self-affine carpets with no grid structure,
\emph{Proc. Amer. Math. Soc.}, \textbf{145}, (2017), 4905--4918.






\bibitem[Fu]{furstenberg}
H. Furstenberg.
 Ergodic fractal measures and dimension conservation,
\emph{Ergodic Th. Dynam. Syst.}, {\bf 28}, (2008), 405--422.

\bibitem[H]{hochman}
M. Hochman.
Dynamics on fractals and fractal distributions,
\emph{preprint}, 2010, available at http://arxiv.org/abs/1008.3731.





\bibitem[JJKRRS]{kaenmakinew}
E. J\"arvenp\"a\"a, M. J\"arvenp\"a\"a, A. K\"aenm\"aki, T. Rajala, S. Rogovin and V. Suomala.
Packing dimension and Ahlfors regularity of porous sets in metric spaces,
\emph{Math. Z.}, { \bf 266}, (2010),   83--105.




\bibitem[KL]{anti2}
A. K{\"a}enm{\"a}ki and J. Lehrb{\"a}ck.
Measures with predetermined regularity and inhomogeneous self-similar sets,
\emph{Ark. Mat.}, {\bf 55}, (2017),   165--184.

\bibitem[KLV]{anti1}
A. K{\"a}enm{\"a}ki, J. Lehrb{\"a}ck and M. Vuorinen.
Dimensions, Whitney covers, and tubular neighborhoods,
{\em Indiana Univ. Math. J.}, {\bf 62}, (2013), 1861–1889.

\bibitem[KP]{kenyonperes}
R. Kenyon and Y. Peres.
Measures of full dimension on affine-invariant sets,
\emph{Ergodic Th. Dynam. Syst.}, {\bf 16}, (1996), 307--323.

\bibitem[KV]{konyagin}
S.V. Konyagin and A.L. Vol'berg.
On measures with the doubling condition, \emph{Math. USSR-Izv.}, \textbf{30}, (1988), 629--638.

\bibitem[LWW]{doublingcarpets}
H. Li, C. Wei and S. Wen.
Doubling Property of Self-affine Measures on Carpets of Bedford and McMullen,
 \emph{Indiana Univ. Math. J. }, {\bf 65}, (2016),  833--865.


\bibitem[LS]{luksak}
J. Luukkainen and E. Saksman.
Every complete doubling metric space carries a doubling measure, \emph{Proc. Amer. Math. Soc.}, \textbf{126}, (1998), 531--534.

\bibitem[M]{mackay}
J. M. Mackay.
Assouad dimension of self-affine carpets,
\emph{Conform. Geom. Dyn.}, {\bf 15}, (2011), 177--187.

\bibitem[MT]{mackaytyson}
J. M. Mackay and J. T. Tyson.
\emph{Conformal dimension. Theory and application},
University Lecture Series, 54. American Mathematical Society, Providence, RI, (2010).

\bibitem[Ma]{mattila}
P. Mattila.
{\em Geometry of Sets and Measures in Euclidean Spaces},
Cambridge University Press, London, (1999).

\bibitem[Mc]{mcmullen}
C.T. McMullen.
The Hausdorff dimension of general Sierpi\'nski carpets,
{\em Nagoya Math. J.}, {\bf 96}, (1984), 1--9.


\bibitem[O]{ojala}
T. Ojala.
{\em Thin and Fat sets: geometry of doubling measures in metric spaces},
Ph.D. dissertation, University of Jyväskylä, (2014).



\bibitem[Ol1]{olsenformalism}
L. Olsen.
A Multifractal Formalism,
\emph{Adv. Math.}, {\bf 116}, (1995), 82--196.

\bibitem[Ol2]{sponges}
L. Olsen.
Self-affine multifractal Sierpinski sponges in $\mathbb{R}^d$,
{\em Pacific J. Math}, {\bf 183}, (1998), 143--199.

\bibitem[P]{preiss}
D. Preiss.
Geometry of measures in $\mathbb{R}^n$: Distribution, rectifiability, and densities,
{\em Ann. of Math.}, {\bf  125},  (1987), 537--643.

\bibitem[R]{robinson}
J. C. Robinson.
{\em Dimensions, Embeddings, and Attractors},
Cambridge University Press, Cambridge, (2011).




\end{thebibliography}
\end{document}